\newtheorem{lemma}{Lemma}[section]
\newtheorem{thm}[lemma]{Theorem}
\newtheorem{prop}[lemma]{Proposition}
\newtheorem{cor}[lemma]{Corollary}
\newtheorem{example}[lemma]{Example}
\newtheorem{defn}[lemma]{Definition}
\newcommand\matR{{\mathbb{R}}}
\begin{document}

\title{Differential forms on diffeological spaces and diffeological gluing, I}

\author{Ekaterina~{\textsc Pervova}}

\maketitle

\begin{abstract}
\noindent This paper aims to describe the behavior of diffeological differential forms under the operation of gluing of diffeological spaces along a smooth map. In the diffeological context, two ways of looking at diffeological forms are available, that of the vector space $\Omega^m(X)$ of all $m$-forms, and that of the pseudo-bundle $\Lambda^m(X)$ of values of these forms. We describe the behavior of the former under a gluing of diffeological spaces.

\noindent MSC (2010): 53C15 (primary), 57R35, 57R45 (secondary).
\end{abstract}

\section*{Introduction}

The aim of this work is to examine the behavior of diffeological differential forms under the operation of diffeological gluing. One of the origins of the notion of a diffeological space \cite{So1}, \cite{So2} was the desire (or need) to provide a remedy to the fact that the category of smooth manifolds is not cartesian closed, whereas the category of diffeological spaces, while including the category of smooth manifolds, turns out to be so; it is furthermore complete and cocomplete. We should mention that  similar motivations led to the development of other similar categories (see \cite{St}, \cite{BH} for an overview), such as Chen spaces (\cite{chen1}---\cite{chen3}, see also \cite{BH}), Sikorski's differential spaces (\cite{sikorski}), Fr\"{o}licher spaces (\cite{frol}), and Smith spaces (\cite{smith}), see \cite{St} for a detailed discussion of interrelations between these structure, \cite{BH} for a comparison of Chen spaces and diffeological spaces, and \cite{watts} for illustration of differences between diffeological and differential spaces (this is particularly relevant in the context of the present work, as in \cite{sasin91}, \cite{sasin92} the author considered the behavior of differential forms on differential spaces obtaining results quite similar to ours; however, the work of Watts (\cite{watts}) shows that one set of those results is not readily deduced from the other).

Apart from such internal mathematical considerations, interest in endowing topological spaces that are not manifolds, with a smooth structure of a kind, arises in various applications. In \cite{optim1} --- \cite{optim3} the authors deemed that the context of diffeological spaces and the corresponding analogues of differential geometric concepts was more appropriate for their work in optimization than the usual context of smooth manifolds. The already-mentioned works by Sikorski and those by Sasin were geared towards obtaining a mathematical framework for studying space-time singularities. Finally, the theory of smooth polyhedral surfaces (\cite{pol-surf}) can, on one hand, be considered as a still another instance of an alternative notion of smooth structure, and on the other hand could in principle be applied to some biochemistry questions, such as for instance those of the size, the curvature, and two-sidedness of lipid rafts\footnote{Certain types of lipid aggregates in cellular membranes.} \cite{rafts}, this via the use of the well-established notion of the \emph{effective molecular shape} of a lipid \cite{rafts2}, which allows to represent lipid molecules by truncated cones, and accordingly the surface of a raft as a polyhedral surface, and the use of the polyhedral version of the Gauss-Bonnet theorem \cite{polya}, using so-called \emph{lipid annular shells} \cite{rafts2} to define principal curvatures in the neighborhood of each cholesterol\footnote{Cholesterol is the main component of lipid rafts, where its distribution is more or less regular and lattice-like.} molecule.\footnote{A number of preliminary considerations on such an approach had been done by Prof. Riccardo Zucchi and myself, with the main obstacle emerging being that the experimental data --- by other authors --- show that the annular shell of a cholesterol molecule consists in general of five other lipids, which fact does not allow for a ready definition of principal curvatures. On the other hand, theoretical calculations show that the number of \emph{disjoint cones} adjacent to a cholesterol-representing cone is four --- and this fact can in principle be used to define the principal curvatures.} It can also be observed that in principle (subject to the development of the necessary differential geometric tools in the diffeological context) this approach could be carried out in the context of diffeological spaces via endowing the truncated cones with a suitable diffeology, for instance, in the way it is done for corners \cite{GI-Z19}, \cite{I-Z24} and simplices \cite{hector}, \cite{CWtriang} (see \cite{II2015}, \cite{ntumba} for similar notions, employed also in \cite{KK2020}, \cite{KK2021}, and \cite{magnotTriang2} for yet another --- and particularly promising --- alternative version).

\paragraph{Diffeological setting} We state in the first section the precise definitions of the terms needed (more details can be found in the excellent book \cite{iglesiasBook} and the subsequent \cite{I-Z24}), but here is a rough description of our context. A \emph{diffeological space} is a set equipped with a \emph{diffeology}, a set of maps into it satisfying a few axioms, that are declared to be smooth. There are ensuing notions of smooth maps between such spaces, the induced diffeologies of all kinds, among which in particular the \emph{subset diffeology} and the \emph{quotient diffeology}, which provide for any subset, and any quotient, of a diffeological space, being in turn a diffeological space. 

The fact that all quotients inherit a natural diffeological structure, provides for the operation of diffeological gluing being well-defined in the diffeological context. As in the case of the similar notion in the case of differential spaces \cite{sasin91}, \cite{sasin92}, this operation is very much akin to the notion of topological gluing: given two sets $X_1$ and $X_2$ and a map $f:X_1\supset Y\to X_2$, the usual gluing procedure yields the space $(X_1\sqcup X_2)/_{x_2=f(x_1)}=:X_1\cup_fX_2$, which for a continuous $f$ between (subsets of) topological spaces has a natural topology. Now, the just-mentioned property of diffeology ensures the same thing, as long as we assume that $f$ is smooth as a map on $Y$, which inherits its diffeology from $X_1$.

A diffeological differential $m$-form on a diffeological space $X$ is a collection of usual $m$-forms, one for each plot, defined on the domain of the definition of the plot, and satisfying a very natural smooth compatibility condition. The collection of all $m$-forms on a given diffeological space is naturally a diffeological vector space, denoted by $\Omega^m(X)$.

\paragraph{Main results} Our main results regard the diffeological vector space $\Omega^m(X_1\cup_f X_2)$, for which we obtain the following statement.\vspace{5mm}

\noindent\textbf{Theorem 1}. \emph{Let $X_1$ and $X_2$ be two diffeological spaces, let $f:X_1\supset Y\to X_2$ be a smooth map, and let $i:Y\hookrightarrow X_1$ and $j:f(Y)\hookrightarrow X_2$ be the natural inclusions. Let $m\geqslant 0$ be an integer. Then $\Omega^m(X_1\cup_f X_2)$ is diffeomorphic to the subset of $\Omega^m(X_1)\oplus\Omega^m(X_2)$, denoted by $\Omega_f^m(X_1)\oplus_{comp}\Omega^m(X_2)$, consisting of all pairs $(\omega_1,\omega_2)$ such that $i^*\omega_1=f^*j^*\omega_2$ and $\omega_1$ is a so-called $f$-invariant form.}\vspace{5mm}

Two forms $\omega_1\in\Omega^m(X_1)$ and $\omega_2\in\Omega^m(X_2)$ satisfying $i^*\omega_1=f^*j^*\omega_2$ are said to be \textbf{\emph{compatible}}; the \textbf{$f$-invariance} of a form $\omega_1$ means essentially that $\omega_1$ induces a well-defined form on the space of orbits of $f$. It is quite evident that in general not every form makes part of a pair of compatible forms, nor is every form on $X$ $f$-invariant, so $\Omega_f^m(X_1)\oplus_{comp}\Omega^m(X_2)$ in general is not a subdirect product of $\Omega^m(X_1)$ and $\Omega^m(X_2)$. Notice also that the above theorem is quite analogous to Proposition 2.6 of \cite{sasin91}, although, as already mentioned, the two categories involved are different (this is particularly relevant to diffeological gluing, since one of the examples in \cite{watts} showing that the obvious functor between these categories is not a natural equivalence (Example 2.67), is in fact a space obtained by diffeological gluing).

After proving Theorem 1 (the proof being relatively straightforward) we then dedicate significant attention to the case of subsequent gluings of more than two spaces and to the inheritance (in a space obtained by gluing) of what we denote as $\mathcal{D}_{inv_1}^{r_1^*}(\cdot)=\mathcal{D}_{inv_2}^{r_2^*}(\cdot\cdot)$ conditions, these actually being a form of extendibility conditions for forms on the factors of gluing (Section 2.3).

\paragraph{Simple examples} For simple illustrations we will use throughout the paper the following three examples:
\begin{enumerate}
\item Let $X_1$, $X_2$ be two copies of $\mathbb{R}$ each endowed with the standard diffeology, let $Y\subset X_1$ be $[-1,1]$, and let $f:Y\to X_2$ be the identity on $[-1,1]$, $ $$f(x)=x$.
\item Let $X_1$, $X_2$ be two copies of $\mathbb{R}$ each endowed with the standard diffeology, let $Y=(0,+\infty)\subset X_1$, and let $f:Y\to X_2$ act by $f(x)=\frac{1}{x}$.
\item Let $X_1$, $X_2$ be two copies of $\mathbb{R}$ each endowed with the standard diffeology, let $Y=[-1,1]\subset X_1$, and let $f:Y\to X_2$ act by $f(x)=x^2$.
\end{enumerate}
The first of the corresponding spaces $X_1\cup_fX_2$ is a kind of one-step ladder, having the form of a capital H, the second is a line but with diffeology different from the standard one on the real line, and the third is a cross, but again diffeologically different from, say, the union of the coordinate axes in $\mathbb{R}^2$ with its usual subset diffeology.

In these examples we shall limit ourselves to considering 1-forms. Various other examples will be introduced as needed to illustrate specific points.

\paragraph{Corners} The $n$-dimensional corner is the subset $K^n=\{(x_1,\ldots,x_n)\,|\,x_i\geqslant 0\mbox{ for all }i=1,\ldots,n\}\subset\mathbb{R}^n$ endowed with the subset diffeology relative to its inclusion into the standard $\mathbb{R}^n$. The intersection of $K^n$ with any hyperplane $x_{i_1}=\ldots=x_{i_k}=0$ is called a \emph{stratum} (as the collection of all strata does indeed form a stratification of $K^n$, see \cite{GI-Z18}, \cite{GI-Z19}), although the term \emph{face} would also be natural to use. Differential forms on $K^n$ were studied in \cite{GI-Z19} (see also \cite{I-Z24}), where it was shown that they are actually restrictions of usual differential forms on the ambient $\mathbb{R}^n$. We provide the following example of gluing of corners, which is mainly used to illustrate the notions related to iterated gluings, in particular, the above-mentioned invariance and extendibility conditions:
\begin{enumerate}
\item[4.] Let $X_1$ be the $2$-dimensional corner, $X_1=\{(x_0,x_1)\in\mathbb{R}^2\,|\,x_0,x_1\geqslant 0\}$, and let $X_2,X_3$ be $1$-dimensional corners, $X_2,X_3=\{t\in\mathbb{R}\,|\,t\geqslant 0\}$. Let $Y$ be the subset of $X_1$ given by the condition $x_0x_1=0$, and let $f:Y\to X_2$ act by $f(x_0,x_1)=\left\{\begin{array}{ll} x_0 & \mbox{if }x_1=0\\ x_1 & \mbox{if }x_0=0 \end{array}\right.$. Let $Z_1$ be the subset of $X_1$ given by $x_1=0,\,0<x_0<\pi$, and let $g_1:Z_1\to X_3$ act by $g_1(x_0,0)=\sin x_0$. Finally, let $Z_2=(0,2)\subset X_2$, and $g_2:Z_2\to X_3$ acts by $g_2(t)=(t-1)^2$.
\end{enumerate}

Treatment of corners is in fact a preliminary for treatment of a more general case of simplicial complexes, which for reasons of space we postpone to a separate paper \cite{forms-smplx} (as we do for treatment of closely related pseudo-bundles \cite{formsII}).

\paragraph{Acknowledgments} The first version of this paper, which had laid the foundations of the present one, had greatly benefited from discussions with Prof. Riccardo Zucchi. I am also grateful to anonymous referees of various versions of this paper for a multitude of useful suggestions, which have all been implemented in the present version. I also would like to thank Patrick Iglesias-Zemmour for discussions regarding diffeological forms on manifolds with corners. Finally, I acknowledge
the MIUR Excellence Department Project awarded to the Department of Mathematics, University of
Pisa, CUP 157G22000700001.

\section{Main definitions}

We briefly recall here the basic definitions regarding diffeological spaces, diffeological vector spaces,  diffeological gluing, and diffeological differential forms.

\subsection{Diffeological spaces}

A \textbf{diffeological space} is  an arbitrary set $X$ equipped with a \textbf{diffeology}, which is a set $\mathcal{D}$ of maps $U\to X$, where $U$ is any domain in $\matR^n$, with varying $n$; the set $\mathcal{D}$ must possess the following properties: it must include all constant maps into $X$, for any $p\in\mathcal{D}$ its pre-composition $p\circ F$ with any usual smooth map $F$ must again belong to $\mathcal{D}$, and, if $p:U\to X$ is a set map and $U$ admits an open cover by some sub-domains $U_{\alpha}$ such that $p|_{U_{\alpha}}\in\mathcal{D}$, then necessarily $p\in\mathcal{D}$. The maps that compose a given diffeology $\mathcal{D}$ on $X$ are called \textbf{plots} of $\mathcal{D}$ (or of $X$). For two diffeologies $\mathcal{D}_1$ and $\mathcal{D}_2$ on the same set $X$, $\mathcal{D}_1$ is said to be \textbf{finer} than $\mathcal{D}_2$ if $\mathcal{D}_1\subseteq\mathcal{D}_2$; if this is the case, $\mathcal{D}_2$ is said to be \textbf{coarser} than $\mathcal{D}_1$.

Given two diffeological spaces $X_1$ and $X_2$, a set map $f:X_1\to X_2$ is said to be \textbf{smooth} if for any plot $p$ of $X_1$ the composition $f\circ p$ is a plot of $X_2$. A map $f$ is a \textbf{diffeomorphism} if it is smooth, bijective, and its inverse is smooth. It is said to be \textbf{subduction} if every plot of $X_2$ locally has form $f\circ p$ for some plot $p$ of $X_1$. More generally, the finest diffeology on $X_2$ for which $f$ is smooth is termed the \textbf{pushforward} of the diffeology of $X_1$ by $f$, and $f$ is a subduction if this coincides with the original diffeology on $X_2$.

Given one or more diffeological spaces, there are standard diffeological counterparts of all the basic set-theoretic and topological constructions, such as taking subspaces, quotients, direct products, and disjoint unions. In particular, any subset $X'$ of a diffeological space $X$ has the standard diffeology that is called the \textbf{subset diffeology} and that consists of precisely those plots of $X$ whose range is contained in $X'$; the quotient of $X$ by any equivalence relation $\sim$ has the \textbf{quotient diffeology} that is the pushforward of the diffeology of $X$ by the quotient projection $X\to X/\sim$. The direct product of a collection of diffeological spaces carries the \textbf{direct product diffeology} that is the coarsest diffeology such that all projections on individual factors are smooth; and the disjoint union has the \textbf{disjoint union diffeology} that is defined as the finest diffeology such that the inclusion of each component is a smooth map.

The space of all smooth maps from $X_1$ to $X_2$ is endowed with the \textbf{functional diffeology} defined as follows. A map $q:U\to C^{\infty}(X,Y)$ is a plot for the functional diffeology if and only if for every plot $p:U'\to X$ of $X$ the natural evaluation map $U\times U'\ni(u,u')\to q(u)(p(u'))\in Y$ is a plot of $Y$.

\subsection{Diffeological vector spaces}

A \textbf{diffeological vector space} is a set $V$ that is a vector space and a diffeological space at the same time, and the operations are smooth maps for the subset diffeology. All the basic operations on vector spaces (subspaces, quotients, direct sums, tensor products, and duals) have their diffeological counterparts (see \cite{iglesiasBook}, \cite{vincent}, \cite{wu}, as well as \cite{wu24} for some recent treatments of diffeological vector spaces), in the sense of there being a standard choice of diffeology on the resulting vector space. Thus, a subspace is endowed with the subset diffeology, the quotient space, with a quotient one, the direct sum carries the product diffeology, and the tensor product, the quotient diffeology relative to the product diffeology on the free product of its factors. The diffeological dual $V^*$ is defined as $C^{\infty}(V,\matR)$, where $\matR$ has standard diffeology, and the diffeology on $V^*$ is the functional diffeology. 

There is a related notion of a diffeological vector pseudo-bundle introduced in \cite{iglFibre} (appearing under different names in \cite{CWtangent}, \cite{pseudobundles}), which is employed in the aforementioned (and closely related to our present sibject) notion of the pseudo-bundle of values of differential forms introduced in \cite{iglesiasBook}.

\subsection{Diffeological gluing}

This concept is a natural counterpart of the usual topological gluing in the diffeological context. Let $X_1$ and $X_2$ be two diffeological spaces, and let $f:X_1\supset Y\to X_2$ be a smooth (for the subset diffeology on $Y$) map. The result of the diffeological gluing of $X_1$ to $X_2$ along $f$ is the space $X_1\cup_f X_2$ defined by $$X_1\cup_f X_2=(X_1\sqcup X_2)/\sim,$$ where $\sim$ is the equivalence relation determined by $f$, that is, $Y\ni y\sim f(y)$. The diffeology on $X_1\cup_f X_2$, called the \textbf{gluing diffeology}, is the pushforward of the disjoint union diffeology on $X_1\sqcup X_2$ by the quotient projection $\pi:X_1\sqcup X_2\to X_1\cup_f X_2$. Notice that the gluing diffeology is a rather weak diffeology, which frequently turns out to be finer than other natural diffeologies that the resulting space might carry (as it occurs for the union of the coordinate axes in $\matR^2$, whose gluing diffeology (relative to the gluing of the two standard axes at the origin) is finer than the subset diffeology relative to its inclusion in $\matR^2$, see again Example 2.67 in \cite{watts}). 

There is a technical convention, which comes in handy when working with glued spaces. It is based on the trivial observation that the maps $$\alpha_1:X_1\hookrightarrow X_1\sqcup X_2\to X_1\cup_f X_2\mbox{ and }\alpha_2:X_2\hookrightarrow X_1\sqcup X_2\to X_1\cup_f X_2,$$ where in both cases the second arrow is the quotient projection $\pi$, are subductions, and morever $\alpha_1|_{X_1\setminus Y}$ and $\alpha_2$ are inductions, with their ranges forming a disjoint cover of $X_1\cup_fX_2$.

Whenever it does not cause confusion, we shall use throughtout the paper the standard notation of $i:Y\hookrightarrow X_1$ and $j:f(Y)\hookrightarrow X_2$ for the standard inclusions of $Y$ and $f(Y)$ to $X_1$ and $X_2$ respectively.

\subsection{Diffeological differential $m$-forms}

For diffeological spaces, there exists a well-developed theory of differential $m$-forms on them (see \cite{iglesiasBook}, Chapter 6, for a detailed exposition). From here on and throughout the paper $m$ is a fixed nonnegative integer.

A \textbf{diffeological differential $m$-form} on a diffeological space $X$ is defined by assigning to each plot $p:\matR^n\supset U\to X$ a usual differential $m$-form $\omega(p)=\sum_{i_1<\ldots<i_m}f_{i_1\ldots i_m}dx^{i_1}\ldots dx^{i_m}\in\Lambda^m(U)$ such that this assignment satisfies the following compatibility condition: if $q:U'\to X$ is another plot of $X$ such that there exists a usual smooth map $F:U'\to U$ with $q=p\circ F$ then $\omega(q)=F^*\omega(p)$. If $\omega$ is a diffeological $m$-form on $X$, its \textbf{differential} $d\omega$ is defined by assigning to each plot $p$ of $X$ the form $d(\omega(p))$, which is clearly an $(m+1)$-form on $X$.

The set of all differential $m$-forms on $X$ is denoted by $\Omega^m(X)$. It is a diffeological vector space, with the addition and the scalar multiplication operations defined pointwise and with the functional diffeology defined as follows: a map $Q:U'\to\Omega^m(X)$ is a plot of $\Omega^mX)$ if and only if for every plot $p:\mathbb{R}^n\supseteq U\to X$ the map $U'\times U\to\Lambda^m(\matR^n)$ given by $(u',u)\mapsto Q(u')(p)(u)$ is smooth; the expression $Q(u')(p)$ stands for the $m$-form on $U$ that the differential $m$-form $Q(u')$ on $X$ assigns to the plot $p$.

\section{The space $\Omega^m(X_1\cup_f X_2)$}

Let $X_1$ and $X_2$ be two diffeological spaces, and let $f:X_1\supset Y\to X_2$ be a smooth map that defines a gluing between them. Since the space $X_1\cup_f X_2$ is a quotient of the disjoint union $X_1\sqcup X_2$, the natural projection $\pi:X_1\sqcup X_2\to X_1\cup_f X_2$ yields the corresponding pullback map $\pi^*:\Omega^m(X_1\cup_f X_2)\to\Omega^m(X_1\sqcup X_2)$; as we show immediately below, the latter space is diffeomorphic to $\Omega^m(X_1)\oplus\Omega^m(X_2)$. We then consider the image of $\pi^*$ (this space is sometimes called the space of \emph{basic} forms \cite{iglesiasBook}, \cite{karshon-watts}); we show that, although in general $\pi^*$ is not surjective,
it is a diffeomorphism with its image. Finally, we describe the structure of this image, and show that various specific conditions defining partial cases of interest are preserved under iterated gluings.

\subsection{The diffeomorphism $\Omega^m(X_1\sqcup X_2)\cong\Omega^m(X_1)\oplus\Omega^m(X_2)$}

This is a rather easy and, in any case, expected fact, but, since its proof does not seem to have been explicitly spelled out anywhere, we provide it.

\begin{thm}\label{omega:sqcup:thm}
The space $\Omega^m(X_1\sqcup X_2)$ is diffeomorphic, as a diffeological vector space, to the space $\Omega^m(X_1)\oplus\Omega^m(X_2)$.
\end{thm}

\begin{proof}
Let us first describe a bijection $\varphi:\Omega^m(X_1\sqcup X_2)\to\Omega^m(X_1)\oplus\Omega^m(X_2)$. Denote by $i_1:X_1\hookrightarrow X_1\sqcup X_2$, $i_2:X_2\hookrightarrow X_1\sqcup X_2$ the natural inclusions; since these are smooth maps, we can define, for every $\omega\in\Omega^m(X_1\sqcup X_2)$, the map $\varphi$ to be given by $\varphi(\omega)=(i_1^*\omega,i_2^*\omega)$.

The map $\varphi$ is obviously linear and smooth, and we need to show that it is bijective with a smooth inverse. Observe first that the maps $i_1,i_2$ are diffeomorphisms of their domains with their images. The map $\varphi^{-1}$ is given by assigning to any pair $(\omega_1,\omega_2)$, $\omega_j\in\Omega^m(X_j)$, $j=1,2$, the form $\omega_1\sqcup\omega_2$ defined as follows. Let $p:U\to X_1\sqcup X_2$ be a plot, let $U=U_1\sqcup U_2$, where each $U_j$, $j=1,2$, is such that $p(U_j)\subseteq i_j(X_j)$, and denote $p_j=p|_{U_j}$; notice that each $U_j$ is open and each $p_j$ is a plot. We now define $(\omega_1\sqcup\omega_2)(p_j)=\omega_j(i_j^{-1}\circ p_j)$ for any $j=1,2$ such that $U_j\neq\emptyset$. It is then clear that $\omega_1\sqcup\omega_2$ is a well-defined $m$-form on $X_1\sqcup X_2$, and $\varphi(\omega_1\sqcup\omega_2)=(\omega_1,\omega_2)$.

It remains to show that $\varphi^{-1}$ is smooth. Let $Q_j:U'\to\Omega^m(X_j)$ for $j=1,2$ be a pair of plots of $\Omega^m(X_1)$, $\Omega^m(X_2)$ respectively, and let us consider $\varphi^{-1}\circ(Q_1,Q_2):U'\to\Omega^m(X_1\sqcup X_2)$. By construction $(\varphi^{-1}\circ(Q_1,Q_2))(u')=Q_1(u')\sqcup Q_2(u')$; let $p:U\to X_1\sqcup X_2$ be a plot. As before, let $U=U_1\sqcup U_2$, where each $U_j$, $j=1,2$, is such that $p(U_j)\subseteq i_j(X_j)$, $p_j=p|_{U_i}$. Then for each $j$ we have $(Q_1(u')\sqcup Q_2(u'))(p_j)=Q_j(u')(i_j^{-1}\circ p_j)$, which is smooth in $u'$. Since both $U_j$ are open, this completes the proof.
\end{proof}

In view of this theorem we shall henceforth and consistently throughout the paper consider the pullback map $\pi^*$ as taking values in $\Omega^m(X_1)\oplus\Omega^m(X_2)$ (by abuse of notation, we will write $\pi^*$ while actually referring to $\varphi\circ\pi^*$).

\subsection{The space $\Omega^m(X_1\cup_fX_2)$ as the image of $\pi^*$}

We now turn to the image of $\pi^*$, describing its structure and showing that it is in fact a diffeomorphic image.

\paragraph{Compatible forms} In order to consider forms on the space $X_1\cup_fX_2$, we need a certain compatibility notion for pairs of forms $\omega_1\in\Omega^m(X_1)$, $\omega_2\in\Omega^m(X_2)$, which characterizes just when the pair $(\omega_1,\omega_2)$ belongs to the image of the pullback map $\pi^*$. The definition (already given in the Introduction) is as follows; recall the natural inclusions $i:Y\hookrightarrow X_1$ and $j:f(Y)\hookrightarrow X_2$, and the corresponding pullback maps $i^*:\Omega^m(X_1)\to\Omega^m(Y)$ and $j^*:\Omega^m(X_2)\to\Omega^m(f(Y))$.

\begin{defn} 
Let $\omega_1\in\Omega^m(X_1)$ and $\omega_2\in\Omega^m(X_2)$ be two forms. They are said to be \textbf{\emph{compatible}} (or, more precisely, \textbf{$f$-compatible}) if $i^*\omega_1=f^*j^*\omega_2$.
\end{defn}

\begin{example}\label{compty:ex}
As an overall observation, note that for any of the six spaces $X_i$ in Examples 1-3, a diffeological differential $1$-form $\omega_i$ on $X_i$ can in fact be identified with a usual differential $1$-form on $\mathbb{R}$ via considering its value $\omega_i(id_i)=h_idx_i$, where $h_i$ is an ordinary smooth function $\mathbb{R}\to\mathbb{R}$, on the identity plot $id_i:\mathbb{R}\to X_i$; its value on any other plot $H:U\to X_i$ is the usual pullback form $H^*\omega_i(id_i)$. In this sense we shall write $\omega_i=h_idx_i$ for a given $1$-form $\omega_i$ on $X_i$.

In the case of Example 4, we observe that, in general, it is known \cite{GI-Z19} that any diffeological differential $m$-form on an $n$-dimensional corner $K_n\subset\mathbb{R}^n$ is the restriction of an ordinary differential form on $\mathbb{R}^n$. It follows that every $m$-form on $K_n$ can be represented as $\sum_{0\leqslant i_1<\ldots<i_m\leqslant n-1}h_{i_1\ldots i_m}dx^{i_1}\ldots dx^{i_m}$ (that is, by an ordinary differential form), where all $h_{i_1\ldots i_m}$ are ordinary smooth functions $(0,+\infty)^n\to\mathbb{R}^n$ such that they and all their derivatives have continuous limits at all the points of $\partial K_n$, the boundary of $K_n$ (the equivalence of the two claims follows from \cite{seeley}). In particular, we shall use the latter presentation considering that all the $h_{i_1\ldots i_m}$ involved are naturally (by continuity) defined at all the points of the boundary of the corner.

Let us now consider the compatibility notion for these examples. For the Examples 1---3, let $\omega_1=h_1dx_1$ and $\omega_2=h_2dx_2$ be $1$-forms on $X_1$ and $X_2$ respectively; we will consider $h_1$ and $h_2$ to be defined on the same $\mathbb{R}$ when this does not create confusion. We then have:
\begin{enumerate}
\item In Example 1, the forms $\omega_1$ and $\omega_2$ are compatible if and only if $h_1|_{[-1,1]}=h_2|_{[-1,1]}$; in particular, every form on $\mathbb{R}^n$ has at least one (many more, actually) form compatible with it, for instance, we might take $h_1=h_2$ on the entire $\mathbb{R}$;
\item In Example 2, $\omega_1$ and $\omega_2$ are compatible if and only if $h_1(x_1)=-\frac{h_2(1/x_1)}{x_1^2}$ for all $x_1\in(0,+\infty)$; in particular, $h_2$ must be such that the righthand function in this equality, defined on $(0,+\infty)$, extend to the entire $\mathbb{R}$. Since many functions (for instance, the constant function corresponding to $\omega_2=dx_2$, do not possess this property, it is evident that there are forms on both $X_2$ and $X_1$ (such as the aforementioned $dx_2$ and, $f$ being the inverse of itself, $dx_1$), that do not possess any compatible counterpart. On the other hand, many do, for instance, the forms $\frac{dx_1}{1+x_1^2}$ and $-\frac{dx_2}{1+x_2^2}$ are compatible;
\item In Example 3, $\omega_1$ and $\omega_2$ are compatible if and only if $h_1(x_1)=2x_1h_2(x_1^2)$ for all $x_1\in[-1,1]$. It is thus evident that every form on $X_2$ has a compatible counterpart (for instance, $dx_2$ and $2x_1dx_1$ are compatible); on the other hand, the same is not true for forms on $X_1$, where, for instance, the form $(x_1+1)dx_1$ does not admit compatible counterparts: indeed, for a form $h_2dx_2$ to be compatible with it, we must have on $[-1,1]$ that $x_1+1=2x_1h_2(x_1^2)$ $\Leftrightarrow$ $h_2(x_1^2)=\frac12+\frac{1}{2x_1}$ for $x_1\neq 0$. However, $h_2(x_1^2)$ must be a smooth function on the entire $[-1,1]$, while $\frac12+\frac{1}{2x_1}$ has a nonremovable discontinuity at $0$, whence the claim;
\item In Example 4, we will write $h_i$ for a $0$-form on $X_i$, $i=1,2,3$, $h_{10}dx_0+h_{11}dx_1$ for a $1$-form on $X_1$, and $h_idt$, $i=2,3$, for a $1$-form on $X_2$ or $X_3$ (observe that, since the gluing sets are $1$-dimensional, every version of compatibility is automatic for $2$-forms on whichever space). We then have that $0$-forms $h_1,h_2$ are $f$-compatible iff $h_1(t,0)=h_1(0,t)=h_2(t)\,\,\forall\,t>0$, $1$-forms $h_{10}dx_0+h_{11}dx_1$, $h_2dt$ are $f$-compatible iff $h_{10}(t,0)=h_{11}(0,t)=h_2(t)\,\,\forall\,t>0$, $0$-forms $h_1,h_3$ are $g_1$-compatible iff $h_1(t,0)=h_3(\sin t)\,\,\forall t\in(0,\pi)$, $1$-forms $h_{10}dx_0+h_{11}dx_1$ and $h_3dt$ are $g_1$-compatible iff $h_{10}(t,0)=\cos t\cdot h_3(\sin t)\,\,\forall t\in(0,\pi)$, and  $0$-forms $h_2,h_3$ are $g_2$-compatible iff $h_2(t)=h_3((t-1)^2)\,\,\forall\,t\in(0,2)$, $1$-forms $h_2dt,h_3dt$ are $g_2$-compatible iff $h_2(t)=2(t-1)h_3((t-1)^2)\,\,\forall\,t\in(0,2)$.
\end{enumerate}
\end{example}

The space of all pairs $(\omega_1,\omega_2)$ such that $\omega_1$ and $\omega_2$ are compatible will be denoted by $\Omega^m(X_1)\oplus_{comp}\Omega^m(X_2)$. Compatible forms possess first of all the following property.

\begin{prop}\label{compatible:forms:in:omega:in:terms:of:pullbacks:prop}
Let $\omega_i\in\Omega^m(X_i)$ for $i=1,2$. Then $\omega_1$ and $\omega_2$ are compatible if and only if for all plots $p$ of $X_1$ whose range is contained in $Y$ we have $\omega_1(p)=\omega_2(f\circ p)$.
\end{prop}

\begin{proof}
Obviously, the plots $p$ as in the statement are in the one-to-one correspondence, via the inclusion map $i$, with plots of the subset diffeology on $Y$, that is, every such $p$ has a unique presentation as $p=i\circ p_1$ for a plot $p_1$ of $Y$. Since $i^*\omega_1(p_1)=\omega_1(p)$ and $f^*j^*\omega_2(p_1)=\omega_2(f\circ p)$, the equivalence in the statement is obvious.
\end{proof}

\paragraph{$f$-equivalent plots and $f$-invariant forms} The other needed property is that of $f$-invariance of a form on $X_1$, whose precise definition we now state. It is based on the auxiliary notion of \emph{$f$-equivalent plots}.

\begin{defn}
Two plots $p_1$ and $p_1'$ of $X_1$ are said to be \textbf{$f$-equivalent} if they have the same domain of definition $U$ and for all $u\in U$ such that $p_1(u)\neq p_1'(u)$ we have $p_1(u),p_1'(u)\in Y$ and $f(p_1(u))=f(p_1(u'))$.
\end{defn}

In other words, two plots on the same domain are $f$-equivalent if they differ only at points that are mapped by both plots to the domain of gluing, and among such, only at those points whose images are identified by $f$. In particular, $p_1,p_1'$ are $f$-equivalent if and only if they descend to the same plot of $X_1\cup_fX_2$, that is, if $\pi\circ i_1\circ p_1=\pi\circ i_1\circ p_1'$ (where $i_1:X_1\hookrightarrow X_1\sqcup X_2$ is as in the above section).

\begin{example}\label{f-equiv:ex}
Since in Examples 1 and 2 the gluing map is injective, every plot is $f$-equivalent only to itself. Whereas in Example 3 it is easy to find instances of $f$-equivalent distinct plots, for example, by letting $p_1,p_1':(-1,1)\to X_1$ act by $p_1(x)=x$, $p_1'(x)=-x$.

Let us now consider Example 4. We first make the following general observation: for any of our spaces $X_i$, $i=1,2,3$, and for any two plots $F_1,F_2:U\to X_i$ with the same domain of definition the set $\{u\in U\,|\,F_1(u)\neq F_2(u)\}$, which will be denoted by $U_{\neq}$, is open in $U$. Indeed, this set is the complement of the set of zeroes of $F_1-F_2$, which is closed as the set of zeroes of a usual smooth function with values in $\mathbb{R}^2$ or $\mathbb{R}$, whichever is the case (this is actually also true for Examples 1-3, but is less needed there). We now obtain that two plots $F_1,F_2:U\to X_1$ are $f$-equivalent if $F_1(U_{\neq})=F_2(U_{\neq})\subseteq\{(x_0,x_1)\,|\,x_0x_1=0\}$, and for all $u\in U_{\neq}$ we have $F_1(u)=\tau F_2(u)$, where $\tau:\mathbb{R}^2\to\mathbb{R}^2$ is the ``switch'' function, $\tau(x_0,x_1)=(x_1,x_0)$ (for instance, $F_1,F_2:(-1,1)\to X_1$, $F_1(t)=(t^2,0)$, $F_2(t)=(0,t^2)$ are $f$-equivalent), two plots  $F_1,F_2$ of $X_1$ with the same domain of definition $U$ are $g_1$-equivalent iff $F_1(U_{\neq})=F_2(U_{\neq})\subseteq(0,\pi)\times\{0\}$ and for all $u\in U_{\neq}$ we have $F_2^0(u)=\pi-F_1^0(u)$, and two plots $F_1,F_2$ of $X_2$ with the same domain of definition $U$ are $g_2$-equivalent iff $F_1(U_{\neq})=F_2(U_{\neq})\subseteq(0,2)$ and for all $u\in U_{\neq}$ we have the equality $F_2(u)=2-F_1(u)$.
\end{example}

\begin{defn}
A form $\omega_1\in\Omega^m(X_1)$ is said to be \textbf{$f$-invariant} if for any two $f$-equivalent plots $p_1,p_1':U\to X_1$ we have $\omega_1(p_1)=\omega_1(p_1')$.
\end{defn}

The space of all $f$-invariant $m$-forms will be denoted by $\Omega_f^m(X_1)$. It is easy to see that this is a vector subspace of $\Omega^m(X_1)$.

\begin{example}
In Examples 1 and 2, since every plot is $f$-equivalent only to itself, every form is automatically $f$-invariant. Let us consider Example 3. In this case the space $\Omega_f^1(X_1)$ of $f$-invariant forms consists precisely of forms $h_1(x_1)dx_1$ where $h_1:\mathbb{R}\to\mathbb{R}$ is a smooth function that is odd on $[-1,1]$. Indeed, let $\omega_1=h_1(x_1)dx_1$ be a $f$-invariant form, and let $p_1,p_1':(-1,1)\to X_1$ be plots of $X_1$ defined by $p_1(t)=t$, $p_1'(t)=-t$. Then $p_1,p_1'$ are $f$-equivalent, and we must have $\omega_1(p_1)=h_1(x_1)dx_1=\omega_1(p_1')=h_1(-x_1)d(-x_1)=-h_1(-x_1)dx_1$, that is, $h_1(x_1)=-h_1(-x_1)$, for all $x_1\in[-1,1]$.

Suppose now that $h_1$ is odd on $[-1,1]$. Observe that if $U$ is a domain in some $\mathbb{R}^n$ and $p,q:U\to\mathbb{R}$ are two smooth functions such that $(p(u))^2=(q(u))^2$ for all $u\in U$ and $p(u)\neq q(u)\Rightarrow |p(u)|\leqslant 1$, then $h(p(u))dp=h(q(u))dq$ on the entire $U$. Indeed, let $U_+=\{u\in U|p(u)=q(u)\}$ and let $U_-=\{u\in U|p(u)=-q(u)\}$; obviously, $p(U_-)$ and $q(U_-)$ are both contained in $[-1,1]$. Therefore the desired equality holds both on $\mbox{Int}(U_+)$ and on $\mbox{Int}(U_-)$. Since $U=U_+\cup U_-$ and $p,q$ are smooth, it holds on the entire $U$, whence the claim.

Let us now turn to Example 4. It follows from the above-given description of equivalent plots that a $0$-form $h_1$ on $X_1$ is $f$-invariant iff $h_1(t,0)=h_1(0,t)\,\,\forall\,t>0$, and a $1$-form $h_{10}dx_0+h_{11}dx_1$ is $f$-invariant iff $h_{10}(t,0)=h_{11}(0,t)\,\&\,h_{10}(0,t)=h_{11}(t,0)\,\,\forall\,t>0$; a $0$-form $h_1$ on $X_1$ is $g_1$-invariant iff $h_1(t,0)=h_1(\pi-t,0)\,\,\forall t\in(0,\pi)$, and a $1$-form $h_{10}dx_0+h_{11}dx_1$ is $g_1$-invariant iff $h_{10}(t,0)=-h_{10}(\pi-t,0)\,\,\forall t\in(0,\pi)$; and finally a $0$-form $h_2$ on $X_2$ is $g_2$-invariant iff $h_2(t)=h_2(2-t)\,\,\forall\,t\in(0,2)$ and a $1$-form $h_2dt$ is $g_2$-invariant iff $h_2(t)=-h_2(2-t)\,\,\forall\,t\in(0,2)$.
\end{example}

As Examples 3 and 4 suggest, in many cases the existence of a form compatible with a given $\omega_1$ implies the $f$-invariance of $\omega_1$ (even if $f$ is not injective). It is easy to see that this is the case for all $X_1,Y$ such that for every plot $p:U\to X_1$ we have $U\subset\,\,\mbox{Cl}(\mbox{Int}(p^{-1}(Y)))\,\cup\,\mbox{Cl}(\mbox{Int}(p^{-1}(X_1\setminus Y)))$. However, since a diffeology can in principle include, as a plot, any type of map, in general the inclusion $(i^*)^{-1}f^*j^*(\Omega^m(X_2))\subseteq\Omega_f^m(X_1)$ may not be guaranteed.

\paragraph{The induced $m$-form $\omega_1\cup_f\omega_2$ on $X_1\cup_f X_2$} Let us now describe a simple construction which, starting with an $f$-invariant $m$-form $\omega_1$ on $X_1$ and an $m$-form $\omega_2$ on $X_2$ compatible with $\omega_1$, yields an $m$-form $\omega_1\cup_f\omega_2$ on $X_1\cup_fX_2$. Let $p:U\to X_1\cup_f X_2$ be an arbitrary plot of $X_1\cup_f X_2$, let $u\in U$, and let $U'$ be a neighborhood of $u$ in $U$ such that $p|_{U'}$ lifts to a plot $p_j$ of $X_j$ for $j=1$ and/or $j=2$. We then set, for every $u'\in U'$, that $$(\omega_1\cup_f\omega_2)(p)(u')=\omega_j(p_j)(u'),$$ which presumably yields a usual differential form on $U'$, and, since $u$ is any, on the entire $U$. The purported form $\omega_1\cup_f\omega_2$ is defined by the assignment $$p\mapsto(\omega_1\cup_f\omega_2)(p).$$

\begin{lemma}
If $\omega_1\in\Omega_f^m(X_1)$ and $\omega_2\in\Omega^m(X_2)$ are compatible then $\omega_1\cup_f\omega_2$ is a well-defined diffeological differential $m$-form on $X_1\cup_fX_2$.
\end{lemma}

\begin{proof}
We need to show that for each plot $p:U\to X_1\cup_f X_2$ of $X_1\cup_f X_2$ the form $(\omega_1\cup_f\omega_2)(p)\in C^{\infty}(U,\Lambda^m(U))$ is well-defined. Suppose that there exist $u\in U$ and neighborhoods $U'$, $U''$ of $u$ such that $p|_{U'}$ lifts to a plot $p'$ of $X_i$, $p|_{U''}$ lifts to a plot $p''$ of $X_j$ ($i,j\in\{1,2\}$ and may or may not coincide), and $\bar{p}':=p'|_{U'\cap U''}\neq p''|_{U'\cap U''}=:\bar{p}''$.

Now, observe that, since $X_2$ smoothly injects into $X_1\cup_f X_2$, $\bar{p}'$ and $\bar{p}''$ cannot be both plots of $X_2$. Assume that one of them, say $\bar{p}'$, is a plot of $X_1$, while the other, $\bar{p}''$, is a plot of $X_2$. Since they project to the same map to $X_1\cup_f X_2$, we can conclude that $\bar{p}'$ takes values in $Y$ and $\bar{p}''=f\circ\bar{p}'$, so $$(\omega_1\cup_f\omega_2)(p|_{U'\cap U''})=\omega_1(\bar{p}')=\omega_2(f\circ\bar{p}')=\omega_2(\bar{p}''),$$ by the compatibility of the forms $\omega_1$ and $\omega_2$ with each other.

Assume now that $\bar{p}'$ and $\bar{p}''$ are both plots of $X_1$. Once again, since they project to the same plot of $X_1\cup_f X_2$, for every $u\in U'\cap U''$ such that $\bar{p}'(u)\neq\bar{p}''(u)$ we have $\bar{p}'(u),\bar{p}''(u)\in Y$ and $f(\bar{p}'(u))=f(\bar{p}''(u))$, that is, $\bar{p}',\bar{p}''$ are $f$-equivalent; since $\omega_1$ is by assumption $f$-invariant, we obtain that $$\omega_1(\bar{p}')=\omega_1(\bar{p}'')=(\omega_1\cup_f\omega_2)(p|{U'\cap U''}).$$ 

We thus conclude that for every plot $p:U\to X_1\cup_f X_2$ and for every point $u$ of $U$ there exists a neighborhood of $u$ in $U$ such that $(\omega_1\cup_f\omega_2)(p)$ is a uniquely defined differential form on that neighborhood. Therefore for every $p$ the form $(\omega_1\cup_f\omega_2)(p)\in C^{\infty}(U,\Lambda^m(U))$ is well-defined. It remains to show that the resulting collection $\{(\omega_1\cup_f\omega_2)(p)\}$ of the usual differential $m$-forms satisfies the smooth compatibility condition. Indeed, let $q:U'\to X_1\cup_f X_2$ be another plot of $X_1\cup_f X_2$ such that there exists a smooth map $F:U'\to U$ such that $q=p\circ F$, and let us show that $\omega(q)=F^*(\omega(p))$.

It is sufficient to consider the case where $U$ is small enough so that $p$ lifts either to a plot of $X_1$ or one of $X_2$. Suppose first that $p$ lifts to a plot $p_2$ of $X_2$. Then we have $p=\pi\circ p_2$, so $q=\pi\circ p_2\circ F$. Notice that $p_2\circ F$ is also a plot of $X_2$ and is a lift of $q$; thus, according to our definition $\omega(q)=\omega_2(p_2\circ F)=F^*(\omega_2(p_2))=F^*(\omega(p))$. If now $p$ lifts to a plot $p_1$ of $X_1$ the same argument is sufficient, whence the conclusion.
\end{proof}

Notice that by construction we can write (in the sense of the remark made after Theorem~\ref{omega:sqcup:thm}) $\pi^*(\omega_1\cup_f\omega_2)=(\omega_1,\omega_2)$.

\begin{example}
We can thus define a differential form on $X_1\cup_fX_2$ by choosing a pair of compatible forms $\omega_1,\omega_2$ with an $f$-invariant $\omega_1$ and indicating that the pair is meant to describe a form on $X_1\cup_fX_2$. For instance, $dx_1\cup_fdx_2$, $\frac{dx_1}{1+x_1^2}\cup_f\frac{-dx_2}{1+x_2^2}$, and $x_1dx_1\cup_f\frac12dx_2$ are forms on the spaces $X_1\cup_fX_2$ of Examples 1, 2, and 3 respectively. In Example 4, the following forms are $1$-forms on spaces $X_1\cup_fX_2$, $X_1\cup_{g_1}X_3$, $X_2\cup_{g_2}X_3$ respectively: $(x_0+x_1)(dx_0+dx_1)\cup_ftdt$, $\sin(x_0+x_1)dx_0\cup_{g_1}sds$, $2(t-1)^3dt\cup_{g_2}sds$.
\end{example}

\paragraph{The image of $\pi^*$ as a set} We now show that all forms on $X_1\cup_fX_2$ can be obtained as $\omega_1\cup_f\omega_2$ for some suitable $\omega_1,\omega_2$.

\begin{thm}\label{image:pullback:forms:thm}
Let $\omega_i$ be a differential $m$-form on $X_i$, for $i=1,2$. The pair $(\omega_1,\omega_2)$ belongs to the image of the pullback map $\pi^*$ if and only if $\omega_1$ and $\omega_2$ are compatible, and $\omega_1$ is $f$-invariant.
\end{thm}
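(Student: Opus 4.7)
Plan of proof.

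For the forward direction, suppose $\omega\in\Omega^1(X_1\cup_f X_2)$ and write $\pi^*(\omega)=\omega_1\cup\omega_2$ under the identification $\Omega^1(X_1\sqcup X_2)\cong\Omega^1(X_1)\times\Omega^1(X_2)$ of the previous subsection. To check $f$-invariance of $\omega_1$, take two $f$-equivalent plots $p_1,p_1':U\to X_1$. At any $u\in U$ with $p_1(u)\neq p_1'(u)$, both values lie in $Y$ with the same image under $f$, hence are identified in $X_1\cup_f X_2$; therefore $\pi\circ i_1\circ p_1=\pi\circ i_1\circ p_1'$ as plots of the glued space, and applying $\omega$ gives $\omega_1(p_1)=\omega_1(p_1')$. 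For the compatibility clause, given a plot $p_1$ of $Y$, the two plots $i_1\circ p_1$ and $i_2\circ f\circ p_1$ of $X_1\sqcup X_2$ project to the same plot of $X_1\cup_f X_2$, and evaluating $\omega$ gives $\omega_1(p_1)=\omega_2(f\circ p_1)$.

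For the backward direction I would construct, from a pair $(\omega_1,\omega_2)$ satisfying the two conditions, a form $\omega\in\Omega^1(X_1\cup_f X_2)$. Given a plot $p:U\to X_1\cup_f X_2$, the fact that the gluing diffeology is the pushforward of the disjoint union diffeology means that every point $u\in U$ has a neighborhood $V$ on which $p|_V=\pi\circ\tilde p$ for some plot $\tilde p:V\to X_1\sqcup X_2$. Split $V=V_1\sqcup V_2$ into the open subdomains on which $\tilde p$ lands in $X_1$ respectively $X_2$, and set
\[
\omega(p)|_V=\omega_1(\tilde p|_{V_1})\,\cup\,\omega_2(\tilde p|_{V_2})
\]
with the obvious meaning on each piece.

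The main technical point is well-definedness of $\omega(p)$, independently of the local lift. Suppose $\tilde p$ and $\tilde p'$ are two lifts of $p|_V$. At a point $v\in V$ the values $\tilde p(v),\tilde p'(v)\in X_1\sqcup X_2$ project to the same element of the glued space, so one of three cases holds: both lie in $X_1$ (either equal, or both in $Y$ with the same $f$-image); both lie in $X_2$ (forcing equality, since $i_2^{X_2}$ is injective); or one lies in $Y\subset X_1$ and the other is its $f$-image in $X_2$. In the first case, by restricting to a small enough neighborhood the two lifts become $f$-equivalent plots of $X_1$, so $f$-invariance of $\omega_1$ gives agreement. In the third case, a neighborhood of $v$ on which $\tilde p$ takes values in $Y$ furnishes a plot of $Y$ with its subset diffeology, on which the compatibility hypothesis $\omega_1(\tilde p)=\omega_2(f\circ\tilde p)=\omega_2(\tilde p')$ applies. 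This is the step where both hypotheses are essential, and it is also the place I expect the argument to become most delicate, since one must check that such a decomposition can be arranged locally (shrinking $V$ if necessary to separate the three cases into open subdomains).

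Once $\omega(p)$ is unambiguously defined on each $V$, it glues to a form on $U$ by the sheaf axiom of diffeological forms; the smooth compatibility of $\omega$ under reparametrizations $F:U'\to U$ reduces to the corresponding compatibility for $\omega_1$ and $\omega_2$ on lifts, since any lift of $p\circ F$ may be taken to be $\tilde p\circ F$. Finally, the identity $\pi^*(\omega)=(\omega_1,\omega_2)$ is immediate from the construction: a plot of $X_1$ (resp.\ $X_2$), viewed as a plot of $X_1\sqcup X_2$, is its own lift, and $\omega$ evaluates on its projection by $\omega_1$ (resp.\ $\omega_2$) by definition.
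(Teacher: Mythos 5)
Your argument is correct and follows essentially the same route as the paper: for the forward direction, evaluate $\pi^*(\omega)$ using $\pi\circ p_1=\pi\circ f\circ p_1$; for the converse, define $\omega$ on (local lifts of) plots via $\omega_1$ and $\omega_2$ and verify well-definedness by the three-case analysis on pairs of lifts, then check smooth compatibility through reparametrized lifts. The one point you flag as delicate, separating the cases into open subdomains, is handled by the clopen decomposition of the domain of any plot of $X_1\sqcup X_2$ that the paper already quotes (Ex.~22, p.~23 of the diffeology book), and your explicit treatment of the mixed case (one lift landing in $Y\subset X_1$, the other equal to its $f$-image in $X_2$) is precisely where the compatibility hypothesis enters, a case the paper's own well-posedness check leaves implicit.
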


\begin{proof}
The proof essentially consists in checking the requirements for a differential form on a given space $X$ to be a pullback of a form on a given quotient of $X$, that are given in Subsection 6.38 of \cite{iglesiasBook}. Suppose that $(\omega_1,\omega_2)=\pi^*\omega$ for some $\omega\in\Omega^m(X_1\cup_f X_2)$. To show that $\omega_1$ and $\omega_2$ are compatible, recall that by definition $\omega_i(p_i)=\omega(\pi\circ p_i)$ for $i=1,2$ and any plot $p_i$ of $X_i$; now, if $p_1$ is a plot for the subset diffeology of $Y$ then $f\circ p_1$ is a plot of $X_2$. Furthermore, $\pi\circ p_1=\pi\circ f\circ p_1$ by the very construction of $X_1\cup_f X_2$. Therefore we have $\omega_1(p_1)=\omega(\pi\circ p_1)=\omega(\pi\circ f\circ p_1)=\omega_2(f\circ p_1)$, as wanted. To show that $\omega_1$ is $f$-invariant, let $p_1,p_1'$ be $f$-equivalent plots. Then $\omega_1(p_1)=\omega(\pi\circ p_1)=\omega(\pi\circ p_1')=\omega_1(p_1')$, so $\omega_1$ is $f$-invariant.

To prove the reverse, it suffices to observe that, as has already been stated, given a compatible pair $\omega_1\in\Omega_f^m(X_1)$, $\omega_2\in\Omega^m(X_2)$, we have $(\omega_1,\omega_2)=\pi^*(\omega_1\cup_f\omega_2)$, whence the claim.
\end{proof}

\paragraph{The map $\pi^*$ is a diffeomorphism $\Omega^m(X_1\cup_f X_2)\cong\Omega_f^m(X_1)\oplus_{comp}\Omega^m(X_2)$} Consider the map $\mathcal{L}:\Omega_f^m(X_1)\oplus_{comp}\Omega^m(X_2)\to\Omega^m(X_1\cup_f X_2)$ given by the assignment $$(\omega_1,\omega_2)\mapsto\omega_1\cup_f\omega_2.$$ As has already been indicated, this is the inverse of the map $\pi^*$, and to prove that the latter is a diffeomorphism, it remains to establish the following claim.

\begin{thm}\label{omega:of:glued:inverse:smooth:thm}
The map $\mathcal{L}:\Omega_f^m(X_1)\oplus_{comp}\Omega^m(X_2)\to\Omega^m(X_1\cup_f X_2)$ is smooth.
\end{thm}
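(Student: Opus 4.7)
The plan is to verify the smoothness of $\mathcal{L}$ directly from the definition of the functional diffeology on $\Omega^1(X_1\cup_f X_2)$, exploiting the fact that smoothness is a local property and that plots of the glued space $X_1\cup_f X_2$ lift locally to plots of either $X_1$ or $X_2$.

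First I would take an arbitrary plot $q:U'\to\Omega_f^1(X_1)\times_{comp}\Omega^1(X_2)$ for the subset diffeology relative to $\Omega^1(X_1)\times\Omega^1(X_2)$. By definition of the product diffeology, such a plot is equivalent to a pair $q=(q_1,q_2)$ where each $q_i:U'\to\Omega^1(X_i)$ is a plot of the functional diffeology, and where the pointwise values $(q_1(u'),q_2(u'))$ automatically satisfy $f$-invariance of the first factor and compatibility between the two factors. The goal is to show that $\mathcal{L}\circ q:U'\to\Omega^1(X_1\cup_f X_2)$ is a plot, which by the characterization recalled in Section 2.2 reduces to proving that for every plot $p:U\to X_1\cup_f X_2$ the evaluation map
\[
U'\times U\ni (u',u)\longmapsto \big(\mathcal{L}(q(u'))\big)(p)(u)=(\omega_1(u')\cup_f\omega_2(u'))(p)(u)\in\Lambda^1(\matR^{\dim U})
\]
is smooth in the ordinary sense, where I wrote $\omega_i(u'):=q_i(u')$.

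Next, I would use the fact that smoothness is local. Fix $(u_0',u_0)\in U'\times U$. Since the diffeology on $X_1\cup_f X_2$ is the pushforward of the disjoint union diffeology under the quotient projection $\pi$, and since the disjoint union diffeology locally coincides with that of one of the components, there exists a connected open neighborhood $U_0\subseteq U$ of $u_0$ and an index $i\in\{1,2\}$ such that $p|_{U_0}=\pi\circ p_i$ for some plot $p_i:U_0\to X_i$. By the very definition of $\mathcal{L}$, for $(u',u)\in U'\times U_0$ one then has
\[
\big(\mathcal{L}(q(u'))\big)(p)(u)=q_i(u')(p_i)(u).
\]
But $q_i$ is a plot of $\Omega^1(X_i)$ and $p_i$ is a plot of $X_i$, so the right-hand side is smooth in $(u',u)$ by the definition of the functional diffeology on $\Omega^1(X_i)$. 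Since $(u_0',u_0)$ was arbitrary, the full evaluation map is smooth.

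The only subtle point, and the one I expect to require a short remark, is the consistency of the formula when $p$ admits two distinct local lifts (for instance, one into $X_1$ and one into $X_2$, near a gluing point). This is not really a new obstacle: the well-definedness lemma already established that any two lifts give the same value of $\omega_1\cup_f\omega_2$, using $f$-invariance of $\omega_1$ and compatibility of the pair. Since these properties hold pointwise for $(q_1(u'),q_2(u'))$ for every $u'\in U'$, the two local expressions $q_1(u')(p_1)(u)$ and $q_2(u')(p_2)(u)$ agree on the overlap, so we may freely choose a lift on each piece of a suitable open cover of $U$ and patch the resulting local smoothness into global smoothness. This completes the verification that $\mathcal{L}$ is smooth, and thus, combined with the previous lemma, yields the desired diffeomorphism.
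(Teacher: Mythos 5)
Your proposal is correct and follows essentially the same route as the paper's proof: split the plot of $\Omega_f^1(X_1)\times_{comp}\Omega^1(X_2)$ into a compatible pair of plots of $\Omega^1(X_1)$ and $\Omega^1(X_2)$, reduce smoothness of $\mathcal{L}\circ q$ to smoothness of the evaluation map against an arbitrary plot of $X_1\cup_f X_2$, lift that plot locally to one of the factors, and invoke $f$-invariance and compatibility to reconcile distinct lifts. The only cosmetic difference is that you localize around each point and appeal to the already-proved well-definedness lemma, where the paper assumes a connected domain and re-enumerates the lift cases explicitly; the content is the same.
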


\begin{proof}
Consider a plot $Q:U\to\Omega_f^m(X_1)\oplus_{comp}\Omega^m(X_2)$. By definition of a subset diffeology and a product diffeology, we can assume that $U$ is small enough so that for every $u\in U$ we have $Q(u)=(Q_1(u),Q_2(u))$, where $Q_1:U\to\Omega_f^m(X_1)$ is a plot of $\Omega_f^m(X_1)$, $Q_2:U\to\Omega^m(X_2)$ is a plot of $\Omega^m(X_2)$, and for all $u\in U$ the forms $Q_1(u)$ and $Q_2(u)$ are compatible.

By definition of the standard diffeology on $\Omega^m(X_i)$, $Q_i$ being a plot of $\Omega^m(X_i)$ means that for every plot $p_i:U_i'\to X_i$ the map $U\times U_i'\to\Lambda^m(\matR^n)$ acting by $(u,u_i')\mapsto(Q_i(u))(p_i)(u_i')$, is smooth. The compatibility of the forms $Q_1(u)$ and $Q_2(u)$ means that $Q_2(u)(f\circ p_1)=Q_1(u)(p_1)$, for all $u$ and for all plots $p_1$ of the subset diffeology of $Y$.

Suppose we are given $Q_1$ and $Q_2$ satisfying these conditions, and let us show that $\mathcal{L}\circ(Q_1,Q_2):U\to\Omega^m(X_1\cup_f X_2)$ is a plot of $\Omega^m(X_1\cup_f X_2)$. Consider an arbitrary plot $p:U'\to X_1\cup_f X_2$ and the corresponding evaluation map $$(u,u')\mapsto(\mathcal{L}(Q_1(u),Q_2(u)))(p)(u'):U\times U'\to\Lambda^m(\matR^n),$$ and let us show that this map is smooth.

Assume, as is sufficient, that $U'$ is small enough so that $p$ lifts either to a plot $p_1$ of $X_1$, or a plot $p_2$ of $X_2$. It may furthermore lift to more than one plot of $X_1$, or it may lift to both a plot of $X_1$ and a plot of $X_2$. Suppose first that $p$ lifts to a precisely one plot, say a plot $p_i$ of $X_i$. Then $$(u,u')\mapsto(\mathcal{L}(Q_1(u),Q_2(u)))(p)(u')=Q_i(u)(p_i)(u')\in\Lambda^m(\matR^n);$$ this is a smooth map, since each $Q_i$ is a plot of $\Omega^m(X_i)$.

Suppose now that $p$ lifts to two distinct plots $p_1$ and $p_1'$ of $X_1$. In this case, however, $Q_1(u)(p_1)=Q_1(u)(p_1')$ because $Q_1(u)$ is $f$-invariant for all $u\in U$, so we get the desired conclusion as in the previous case. Finally, if $p$ lifts to both $p_1$ and $p_2$ (each $p_i$ being a plot of $X_i$) then $p_2=f\circ p_1$, and we obtain the claim by using the compatibility of the forms $Q_1(u),Q_2(u)$ for any given $u$.
\end{proof}

We thus obtain a complete proof of Theorem 1 stated in the Introduction.

\begin{cor}\label{omega:of:glued:cor}
The pullback map $\pi^*$ is a diffeomorphism $\Omega^m(X_1\cup_fX_2)\to\Omega_f^m(X_1)\oplus_{comp}\Omega^m(X_2)$.
\end{cor}

\begin{example}\label{spaces:one:glue:ex}
We now use the results just obtained to describe the space $\Omega^1(X_1\cup_fX_2)$ for Examples 1-3, as well as various spaces of forms for Example 4. Notice first that Corollary~\ref{omega:of:glued:cor} immediately implies that in all three cases the space $\Omega^1(X_1\cup_fX_2)$ is diffeomorphic, as a vector space, to a vector subspace of $C^\infty(\mathbb{R})\oplus C^\infty(\mathbb{R})$. Furthermore, we can establish the following claims.
\begin{enumerate} 
\item In the case of Example 1, the space $\Omega^1(X_1\cup_fX_2)$ is diffeomorphic to $C_{(-1,1)}^\infty(\mathbb{R})\oplus C^\infty(\mathbb{R})$, where $C_{(-1,1)}^\infty(\mathbb{R})$ is the space of all the smooth functions on $\mathbb{R}$ that vanish on $[-1,1]$ and $C^\infty(\mathbb{R})$ is the usual space of all smooth functions $\mathbb{R}\to\mathbb{R}$, considered both with the usual functional diffeology. The diffeomorphism $\Psi:\Omega^1(X_1\cup_fX_2)\to C_{(-1,1)}^\infty(\mathbb{R})\oplus C^\infty(\mathbb{R})$ acts by $h_1dx_1\cup_fh_2dx_2\mapsto(h_1-h_2,h_2)$; this is obviously smooth and linear, with a smooth linear inverse. 
\item In the case of Example 2, the space $\Omega^1(X_1\cup_fX_2)$ is isomorphic to $C_{(0,+\infty)}^\infty(\mathbb{R})\oplus C_{1/*}^\infty(\mathbb{R})$, where $C_{(0,+\infty)}^\infty(\mathbb{R})$ is the space of those smooth functions on $\mathbb{R}$ that vanish on $(0,+\infty)$ and $C_{1/*}^\infty(\mathbb{R})$ is the space of the smooth functions $h_2$ on $\mathbb{R}$ such that the limits at $0$ of all the derivatives of $\frac{h_2(1/x)}{x^2}$ exist finite. We define the isomorphism $\Psi$ as follows. Fix a datum $(\{a_k\},\{b_k\},\phi_k)$ for the extension operator $E$ constructed by Seeley in \cite{seeley} (recall that $E$ assigns to every smooth function $\mathbb{R}^n\times\mathbb{R}_+\to\mathbb{R}$ that has all the continuous derivatives as $\mathbb{R}_+\ni x\to 0$, its smooth extension to the entire $\mathbb{R}^{n+1}$, for whatever integer $n\geqslant 0$). Let $\omega=h_1dx_1\cup_fh_2dx_2$ be a form in $\Omega^1(X_1\cup_fX_2)$. An obvious isomorphism $\Psi:\Omega^1(X_1\cup_fX_2)\to C_{(0,+\infty)}^\infty(\mathbb{R})\oplus C_{1/*}^\infty(\mathbb{R})$ is given by setting $\Psi(\omega)=(h_1(x_1)+E(\frac{h_2(1/x_1)}{x_1^2}),h_2)$. Indeed, its linearity is guaranteed by Seeley's theorem, and it has an obvious linear inverse. Also, it is easy to see that $\Omega^1(X_1\cup_fX_2)$ fibers over $C_{1/*}^\infty(\mathbb{R})$, via the map $h_1dx_1\cup_fh_2dx_2\mapsto h_2$, with the fiber diffeomorphic to $C_{(0,+\infty)}^\infty(\mathbb{R})$; note that if we can ensure that $E$ is a (diffeological) diffeomorphism, then $\Psi$ is a diffeomorphism as well.
\item In the case of Example 3, the space $\Omega^1(X_1\cup_fX_2)$ is again (linearly) diffeomorphic to $C_{(-1,1)}^\infty(\mathbb{R})\oplus C^\infty(\mathbb{R})$. Indeed, we define the desired diffeomorphism $\Psi:\Omega^1(X_1\cup_fX_2)\to C_{(-1,1)}^\infty(\mathbb{R})\oplus C^\infty(\mathbb{R})$ by setting, for $\omega=h_1dx_1\cup_fh_2dx_2$, $\Psi(\omega)=(h_1(x_1)-2x_1h_2(x_1^2),h_2)$, which is obviously smooth, linear, and has a smooth linear inverse. 
\item In Example 4, let us first introduce the following notation. For sets $X$, where $X$ is either $[0,+\infty)$ or $[0,+\infty)^2$ we denote by $C^{\infty}(X)$ the set of all usual smooth functions on the interior of $X$ that can be extended to a smooth function on the ambient Euclidean space; for a subset $A$ of the boundary of $X$, we denote $C_A^{\infty}(X)$ the subset of $C^{\infty}(X)$ consisting of all those functions that are flat at all points of $A$. We then easily obtain that: $\Omega^0(X_1\cup_fX_2)\cong C_{x_0x_1=0}^{\infty}([0,+\infty)^2)\times C^{\infty}([0,+\infty))$ via the diffeomorphism acting by $h_1\cup_fh_2\mapsto(h_1-(h_2,h_2),h_2)$, $\Omega^1(X_1\cup_fX_2)\cong C_{x_1=0}^{\infty}([0,+\infty)^2)\times C_{x_0x_1=0}^{\infty}([0,+\infty)^2)\times C^{\infty}([0,+\infty))$ via the diffeomorphism acting by $(h_0dx_0+h_1dx_1)\cup_fh_2dt\mapsto(h_0-(h_2,0),h_1-h_0^0-h_0^1,h_2)$, where $h_0^0(x_0,x_1)=h_0(x_0,0)$, $h_0^1(x_0,x_1)=h_0(0,x_1))$; $\Omega^0(X_1\cup_{g_1}X_3)\cong C_{(0,\pi)\times\{0\}}^{\infty}([0,+\infty)^2)\times C^{\infty}([0,+\infty))\times C^{\infty}([0,+\infty))$ via the diffeomorphism acting by $h_1\cup_{g_1}h_3\mapsto(h_1-(h_3\circ\sin,0),h_3)$, $\Omega^1(X_1\cup_{g_1}X_3)\cong C_{(0,\pi)\times\{0\}}^{\infty}([0,+\infty)^2)\times C^{\infty}([0,+\infty))$ via the diffeomorphism acting by $(h_0dx_0+h_1dx_1)\cup_{g_1}h_3dt\mapsto(h_1-(\cos\cdot(h_3\circ\sin),0),h_1,h_3)$; and $\Omega^0(X_2\cup_{g_2}X_3)\cong C_{(0,2)}^{\infty}([0,+\infty))\times C^{\infty}([0,+\infty))$ via the diffeomorphism acting by $h_2\cup_{g_2}h_3\mapsto(h_2(t)-h_3((t-1)^2),h_3)$, $\Omega^1(X_2\cup_{g_2}X_3)\cong C_{(0,2)}^{\infty}([0,+\infty))\times C^{\infty}([0,+\infty))$ via the diffeomorphism acting by $h_2dt\cup_{g_2}h_3ds\mapsto(h_2(t)-2(t-1)h_3((t-1)^2),h_3)$.
\end{enumerate}
\end{example}

\paragraph{The Souriau's de Rham complex $\Omega_{dR}(X_1\cup_fX_2)$} Theorem 1 allows to obtain immediately the following simple observation regarding the de Rham complex $\Omega_{dR}(X_1\cup_fX_2)$. Recall first that there are two notions of the de Rham complex in diffeology. In this section we briefly consider the original one, due to Souriau \cite{So1}; do note that there is an alternative notion, the \emph{singular de Rham complex}, which is not equivalent to the Souriau's one and is due to Kuribayashi \cite{KK2020} (see \cite{KK2021} for a comparison of the two).

Let $\Omega_{dR}(X)$ stand for the Souriau's de Rham complex of a given diffeological space $X$. The following easy observations allow us to comment on $\Omega_{dR}(X_1\cup_fX_2)$.

\begin{lemma}
Let $\omega_i\in\Omega^m(X_i)$, $i=1,2$. If $\omega_1$ and $\omega_2$ are compatible then so are $d\omega_1$ and $d\omega_2$.
\end{lemma}

\begin{proof}
Let us show that $i^*d\omega_1=f^*j^*d\omega_2$. Let $p:U\to Y$ be a plot of $Y$. Then $i^*d\omega_1(p)=d\omega_1(i\circ p)=d\omega_2(j\circ f\circ p)$ by the assumption on $\omega_1$ and $\omega_2$, and we obtain the desired equality.
\end{proof}

We furthermore have the following property.

\begin{lemma}
If $\omega_1\in\Omega_f^m(X_1)$ and $\omega_2\in\Omega^m(X_2)$ are compatible then $$d(\omega_1\cup_f\omega_2)=d\omega_1\cup_fd\omega_2.$$
\end{lemma}

\begin{proof}
Let $p:U\to X_1\cup_fX_2$ be a plot. It is sufficient to consider the case when $U$ is small enough so that $p$ lifts to either a plot $p_1$ of $X_1$ or to a plot $p_2$ of $X_2$. Suppose $p$ lifts to $p_1$; then $d(\omega_1\cup_f\omega_2)(p)=d\left((\omega_1\cup_f\omega_2)(p)\right)=d\omega_1(p_1)$ by definition of the form $\omega_1\cup_f\omega_2$, while $(d\omega_1\cup_fd\omega_2)(p)=d\omega_1(p_1)$ by the same definition applied to the forms $d\omega_1$ and $d\omega_2$. Likewise, if $p$ lifts to $p_2$, we have $d(\omega_1\cup_f\omega_2)(p)=d\omega_2(p_2)=(d\omega_1\cup_fd\omega_2)(p)$, whence the claim.
\end{proof}

The two lemmas, together with Corollary~\ref{omega:of:glued:cor}, immediately imply the following statement.

\begin{thm}
The complex $\Omega_{dR}(X_1\cup_fX_2)$ is a, in general proper, subcomplex of the direct sum complex $\Omega_{dR}(X_1)\oplus\Omega_{dR}(X_2)$.
\end{thm}

Of course, it also follows that the de Rham groups $H_{dR}^m(X_1\cup_fX_2)$ are subgroups of the direct sums $H_{dR}^m(X_1)\oplus H_{dR}^m(X_2)$.

\paragraph{The projections of $\Omega_f^m(X_1)\oplus_{comp}\Omega^m(X_2)$ to $\Omega^m(X_1)$ and to $\Omega^m(X_2)$} We now provide some extra considerations on the space $\Omega_f^m(X_1)\oplus_{comp}\Omega^m(X_2)$, particularly in terms of the potential surjectivity, or lack of same, of its projections on the two factors of the direct sum (which question corresponds of course to that of it being possible to extend a given form on $X_i$ to a form on $X_1\cup_fX_2$). To begin with, note that particularly for a non-injective $f$, it is quite clear that at least the former of these two projections cannot be expected to always be surjective: indeed, the projection on $\Omega^m(X_1)$ contains $f$-invariant forms only, which in general is a space smaller than the entire $\Omega^m(X_1)$ (see Example 3). Furthermore, not every $f$-invariant form is in the image of the projection on $\Omega^m(X_1)$; for instance, in Example 2 the form $dx_1$ is $f$-invariant but it is not in the image of the projection $\Omega_f^1(X_1)\oplus_{comp}\Omega^1(X_2)\to\Omega^1(X_1)$, since $(f^{-1})^*(dx_1)=-\frac{dx_2}{x_2^2}$ does not extend to the entire $X_2$ (in other words, $f$-invariance of a form $\omega_1$ does not by itself imply the existence of a form $\omega_2$ compatible with it). Furthermore, both Examples 2 and 3 show that compatibility is also a restrictive condition, in terms of when the two projections could be surjective.

An evident criterion for the two projections being surjective can however be stated in terms of the pullback maps $i^*:\Omega^m(X_1)\to\Omega^m(Y)$ and $j^*:\Omega^m(X_2)\to\Omega^m(f(Y))$.

\begin{prop}\label{when:prod:comp:is:sub-direct:prop}
The two projections $\mbox{pr}_1:\Omega_f^m(X_1)\oplus_{comp}\Omega^m(X_2)\to\Omega^m(X_1)$ and $\mbox{pr}_2:\Omega_f^m(X_1)\oplus_{comp}\Omega^m(X_2)\to\Omega^m(X_2)$ are both surjective if and only if every form on $X_1$ is $f$-invariant, and the following is true: $$i^*(\Omega^m(X_1))=(f^*j^*)(\Omega^m(X_2)).$$
\end{prop}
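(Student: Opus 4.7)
The plan is to derive this essentially as a direct consequence of Proposition \ref{compatible:forms:in:omega:in:terms:of:pullbacks:prop}, which recasts compatibility as the equality $i^*\omega_1 = f^*j^*\omega_2$ in $\Omega^1(Y)$. Given that reformulation, the elements of $\Omega^1(X_1)\times_{comp}\Omega^1(X_2)$ are exactly the pairs $(\omega_1,\omega_2)$ whose images in $\Omega^1(Y)$ coincide, so asking when each projection is surjective becomes the question of when every form on one side can be matched by a form on the other side having the same pullback to $Y$.

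First I would unpack surjectivity of $\mbox{pr}_1$: it asks that for every $\omega_1\in\Omega^1(X_1)$ there exist $\omega_2\in\Omega^1(X_2)$ with $i^*\omega_1 = f^*j^*\omega_2$, i.e.\ $i^*\omega_1\in (f^*j^*)(\Omega^1(X_2))$. Quantifying over all $\omega_1$, this is precisely the inclusion
$$i^*(\Omega^1(X_1))\subseteq (f^*j^*)(\Omega^1(X_2)).$$
By an entirely symmetric argument, surjectivity of $\mbox{pr}_2$ is equivalent to
$$(f^*j^*)(\Omega^1(X_2))\subseteq i^*(\Omega^1(X_1)).$$

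Combining these two equivalences, both $\mbox{pr}_1$ and $\mbox{pr}_2$ are surjective if and only if the two images coincide, which is the stated condition $i^*(\Omega^1(X_1)) = (f^*j^*)(\Omega^1(X_2))$. The forward implication and the reverse implication are then a matter of noting that the compatibility condition leaves one component of the pair free up to the constraint on its pullback to $Y$, and that $f$ being a diffeomorphism onto its image guarantees $f^*$ (and $j^*$, as a pullback along an inclusion) behave well enough for the set-theoretic manipulation to go through cleanly.

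The only possible subtlety is ensuring that the quantifier structure is handled correctly: surjectivity of a projection is a statement about \emph{every} form in the target, and one must avoid conflating this with the weaker statement that \emph{some} form on each side admits a compatible partner. Once one writes the quantifiers out explicitly and appeals to Proposition \ref{compatible:forms:in:omega:in:terms:of:pullbacks:prop} to translate compatibility into the equality of pullbacks, the proof is essentially a short set-theoretic verification, requiring no further analytic input.
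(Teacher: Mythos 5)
Your argument is correct and follows essentially the same route as the paper: both reduce surjectivity of each projection to an inclusion between the images $i^*(\Omega^1(X_1))$ and $(f^*j^*)(\Omega^1(X_2))$ via Proposition \ref{compatible:forms:in:omega:in:terms:of:pullbacks:prop}, and then combine the two inclusions into the stated equality. In fact the paper's own proof breaks off mid-sentence after treating $\mbox{pr}_1$, so your write-up, with the quantifiers made explicit and the symmetric case for $\mbox{pr}_2$ spelled out, is the more complete version of the intended argument.
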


\begin{example}
In Example 1 the spaces $i^*(\Omega^1(X_1))$ and $(f^*j^*)(\Omega^1(X_2))$ are both equal to the set of forms of type $h_1dx_1$, where $h_1:(-1,1)\to\mathbb{R}$ is any smooth function that admits an extension to the entire $\mathbb{R}$; in particular the two spaces coincide, so the two projections are surjective. In Example 2, the space $i^*(\Omega^1(X_1))$ is the space of all forms of type $h_1dx_1$, where $h_1:(0,+\infty)\to\mathbb{R}$ is any smooth function that admits an extension to the entire $\mathbb{R}$. On the other hand, the space $(f^*j^*)(\Omega^1(X_2))$ is the space of forms of shape $-\frac{h_2(1/x_1)}{x_1^2}dx_1$, where $h_2$ is any smooth function on $\mathbb{R}$; the latter space is obviously different from $i^*(\Omega^1(X_1))$, since it contains forms (for instance, $-\frac{dx_1}{x_1^2}$) that are not defined on the entire $\mathbb{R}$; and in fact neither of the two projections is surjective. Finally, in Example 3 the space $i^*(\Omega^1(X_1))$ is again equal to the set of forms of type $h_1dx_1$, where $h_1:(-1,1)\to\mathbb{R}$ is any smooth function that admits an extension to the entire $\mathbb{R}$, whereas the space $(f^*j^*)(\Omega^1(X_2))$ is the space of forms of shape $2x_1h_2(x_1^2)dx_1$, where $h_2:\mathbb{R}\to\mathbb{R}$ is any smooth function. This space is actually smaller than $i^*(\Omega^1(X_1))$. Indeed, consider the form $(x_1+1)dx_1\in i^*(\Omega^1(X_1))$; if we were to have, for some $h_2\in C^\infty(\mathbb{R})$, that $(x_1+1)dx_1=2x_1h_2(x_1^2)dx_1$, on $(-1,1)\setminus\{0\}$ we would have $\frac12+\frac{1}{2x_1}=h_2(x_1^2)$, \emph{i.e.} that $\frac12+\frac{1}{2x_1}$ would be an even function on $(-1,1)\setminus\{0\}$, which is obviously not the case. Thus, of the two projections, that on the second factor is surjective, while the other is not so. Example 4 will be considered below (see Example~\ref{phfrd:dfgies:expl}).
\end{example}

\paragraph{The two pushforward diffeologies $\mathcal{D}_f^{i^*}(Y)$ and $\mathcal{D}^{f^*j^*}(Y)$}\label{two:diffeologies:condition:sect} A further version of the above extendibility criterion is as follows; it corresponds to a possibility to extend a given \emph{plot} of $\Omega^m(X_i)$ to a plot of $\Omega^m(X_1\cup_fX_2)$.

Denote by $\mathcal{D}_f^{i^*}(Y)$ the diffeology on $\Omega^m(Y)$ that is the pushforward of the diffeology of $\Omega_f^m(X_1)$ by the map $i^*$. Since this map is smooth for the standard functional diffeology on $\Omega^m(Y)$, $\mathcal{D}_f^{i^*}(Y)$ is contained in this standard diffeology; notice that it may be properly contained. Next, let $\mathcal{D}^{f^*j^*}(Y)$ be another diffeology on $\Omega^m(Y)$, and precisely the one obtained as the pushforward of the standard functional diffeology on $\Omega^m(X_2)$ by the map $f^*j^*$. Also in this case, it is contained in the standard diffeology of $\Omega^m(Y)$, perhaps properly. The further extendibility condition is that $$\mathcal{D}_f^{i^*}(Y)=\mathcal{D}^{f^*j^*}(Y).$$ It is satisfied in many standard contexts, such as those of connected simply-connected domains in $\matR^n$, or its affine subsets (see also Example~\ref{phfrd:dfgies:expl} below). Moreover, we observe that this assumption implies the equality  $i^*(\Omega_f^m(X_1))=(f^*j^*)(\Omega^m(X_2))$.

\begin{prop}
Let $X_1$ and $X_2$ be two diffeological spaces, and let $f:X_1\supseteq Y\to X_2$ be a gluing map. If $\mathcal{D}_f^{i^*}(Y)=\mathcal{D}^{f^*j^*}(Y)$ then $i^*(\Omega_f^m(X_1))=(f^*j^*)(\Omega^m(X_2))$.
\end{prop}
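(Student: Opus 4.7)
The plan is to prove the two inclusions separately and observe that the argument is symmetric in the two factors, so it suffices to present one direction, say $i^*(\Omega^1(X_1))\subseteq (f^*j^*)(\Omega^1(X_2))$. Pick $\alpha\in i^*(\Omega^1(X_1))$, and write $\alpha=i^*\omega_1$ for some $\omega_1\in\Omega^1(X_1)$. If $\alpha=0$, then $\alpha=f^*j^*(0)$ is trivially in the target image. Hence I would immediately reduce to the case $\alpha\neq 0$.

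The key trick is to exploit the vector space structure by producing a non-constant plot through $\alpha$. Specifically, use the plot $q:\matR\to\Omega^1(X_1)$ given by $q(t)=t\omega_1$, which is smooth because scalar multiplication is smooth in the diffeological vector space $\Omega^1(X_1)$. Then $i^*\circ q:\matR\to\Omega^1(Y)$, $t\mapsto t\alpha$, is by construction a plot of the pushforward diffeology $\calD_1^{\Omega}$. By the hypothesis $\calD_1^{\Omega}=\calD_2^{\Omega}$, it is also a plot of $\calD_2^{\Omega}$.

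Now I would invoke the local characterization of the pushforward diffeology applied at the point $t=1$: there must exist an open neighborhood $V\ni 1$ such that either $(i^*\circ q)|_V$ is a constant map, or there exists a plot $\tilde{q}:V\to\Omega^1(X_2)$ with $(i^*\circ q)|_V=f^*j^*\circ\tilde{q}$. The first alternative would force $t_1\alpha=t_2\alpha$ for distinct $t_1,t_2\in V$, hence $\alpha=0$, contradicting our standing assumption. So the second alternative must hold, and evaluation at $t=1$ yields $\alpha=f^*j^*(\tilde{q}(1))\in (f^*j^*)(\Omega^1(X_2))$, as required.

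The subtle point, and what I expect to be the main conceptual obstacle, is the fact that the pushforward diffeology on $\Omega^1(Y)$ a priori contains all constant plots at arbitrary points of $\Omega^1(Y)$ (by axiom D1), not merely at points in the image of the defining map; so equality of the two pushforward diffeologies does not formally imply equality of images by a naive ``evaluate at a constant plot'' argument. This is why the proof must use a genuinely non-constant plot through $\alpha$, which is made possible precisely by the linear structure of $\Omega^1(X_1)$ and the linearity of $i^*$.
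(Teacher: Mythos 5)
Your proof is correct, and it is in fact more careful than the paper's own argument on precisely the point you flag. The paper takes a \emph{constant} plot $p_1^{\Omega}:U\to\{\omega_1\}\subset\Omega^1(X_1)$, observes that $i^*\circ p_1^{\Omega}\in\calD_1^{\Omega}=\calD_2^{\Omega}$, and then asserts the existence of a plot $p_2^{\Omega}$ of $\Omega^1(X_2)$ with $i^*\circ p_1^{\Omega}=(f^*j^*)\circ p_2^{\Omega}$. As you observe, that step is not justified as written: constant maps belong to every diffeology by the first axiom, so the constant map at $i^*\omega_1$ lies in $\calD_2^{\Omega}$ whether or not $i^*\omega_1$ is in the image of $f^*j^*$, and the local characterization of a pushforward diffeology only guarantees that a plot is locally \emph{either constant or} of the form $(f^*j^*)\circ p_2^{\Omega}$. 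Your substitution of the non-constant plot $t\mapsto t\omega_1$ (legitimate because $\Omega^1(X_1)$ is a diffeological vector space, so this is a plot, and $i^*$ is linear, so its image is $t\mapsto t\,i^*\omega_1$) rules out the constant alternative near $t=1$ whenever $i^*\omega_1\neq 0$, forcing a genuine local lift, while the case $i^*\omega_1=0$ is handled trivially. So your argument follows the same overall strategy as the paper --- push a plot through $\omega_1$ forward by $i^*$, transfer it to $\calD_2^{\Omega}$ by the hypothesis, and lift --- but your choice of a non-constant plot is exactly what is needed to make the lifting step valid; in that sense your proof repairs a gap in the published one rather than merely reproducing it.
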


\begin{proof}
Let $i^*(\omega_1)\in i^*(\Omega_f^m(X_1))$, where $\omega_1\in\Omega_f^m(X_1)$ is any $f$-invariant $m$-form. Consider a constant map $p_1^{\Omega}:U\to\{\omega_1\}\subset\Omega_f^m(X_1)$; this is a plot of $\Omega_f^m(X_1)$ since all constant maps are so. By assumption, $i^*\circ p_1^{\Omega}\in\mathcal{D}_f^{i^*}(Y)=\mathcal{D}^{f^*j^*}(Y)$. Since $\mathcal{D}^{f^*j^*}(Y)$ is defined as the pushforward of the diffeology of $\Omega^m(X_2)$ by the map $f^*j^*$, there exists a plot $p_2^{\Omega}:U\to\Omega^m(X_2)$ of $\Omega^m(X_2)$ such that $i^*\circ p_1^{\Omega}=(f^*j^*)\circ p_2^{\Omega}$. Let $\omega_2\in\Omega^m(X_2)$ be any form in the range of $p_2^{\Omega}$. Then $i^*\omega_1=f^*j^*\omega_2$, which means that $i^*(\Omega^m(X_1))\subseteq(f^*j^*)(\Omega^m(X_2))$. The reverse inclusion is proved in exactly the same way, therefore the claim.
\end{proof}

\subsection{Iterated gluings}

We now consider finite sequences of subsequent gluings.

\subsubsection{Notation}\label{notation:iter:sect} 

When it comes to iterated gluings, the notation starts getting cumbersome, so we dedicate a separate section to spelling it out. Let $X_1,X_2,X_3$ be diffeological spaces, let $Y\subseteq X_1$, let $Z_1\subseteq X_1$, $Z_2\subseteq X_2$, let $f:Y\to X_2$ be a subduction, and let $g_1:Z_1\to X_3$, $g_2:Z_2\to X_3$ be smooth maps, where $g_1$ is such that $f(y)=f(y')\Rightarrow g_1(y)=g_1(y')$ for all $y,y'\in Y\cap Z_1$; let also $$i:Y\hookrightarrow X_1,\,\,\,j:f(Y)\hookrightarrow X_2,\,\,\,k_i:Z_i\hookrightarrow X_i,\,\,\,l_i:g_i(Z_i)\hookrightarrow X_3,\,\,\,i=1,2$$ be the natural inclusions. Recall that $\alpha_1:X_1\hookrightarrow X_1\sqcup X_2\xrightarrow{\pi}X_1\cup_fX_2$ and $\alpha_2:X_2\hookrightarrow X_1\sqcup X_2\xrightarrow{\pi}X_1\cup_fX_2$; denote by $\bar{k}_1$ and $\bar{k}_2$ the natural inclusions of, respectively, $\alpha_1(Z_1)$ and $\alpha_2(Z_2)$ in $X_1\cup_fX_2$; note that $\alpha_1\circ k_1=\bar{k}_1\circ\alpha_1|_{Z_1}$ and $\bar{k}_2\circ \alpha_2|_{Z_2}=\alpha_2\circ k_2$.

We set $\tilde{g}_1:\alpha_1(Z_1)\to X_3$ to be such that $g_1=\tilde{g}_1\circ\alpha_1|_{Z_1}$, and $\tilde{g}_2:\alpha_2(Z_2)\rightarrow X_3$ to be such that $g_2=\tilde{g}_2\circ \alpha_2|_{Z_2}$ (observe that $\tilde{g}_2$ is always defined, whereas defining $\tilde{g}_1$ requires the above-stated assumption on $g_1$), and consider the spaces $(X_1\cup_fX_2)\cup_{\tilde{g}_1}X_3$ and $(X_1\cup_fX_2)\cup_{\tilde{g}_2}X_3$, determining in Sections~\ref{invarnce:sect}---\ref{case:diffeo:sect} the corresponding spaces of differential $m$-forms. Observe that these two spaces are not the only possible way to interpret what it means to glue together subsequently three diffeological spaces; in Section~\ref{other:glue:config:sect} we consider the alternative possibilities.

\subsubsection{Invariance with respect to gluing maps and iterated gluings}\label{invarnce:sect} 

By Corollary~\ref{omega:of:glued:cor} the spaces $\Omega^m((X_1\cup_fX_2)\cup_{\tilde{g}_1}X_3)$ and $\Omega^m((X_1\cup_fX_2)\cup_{\tilde{g}_2}X_3)$ are diffeomorphic to $\Omega^m(X_1\cup_fX_2)_{\tilde{g}_1}\oplus_{comp}\Omega^m(X_3)$ and $\Omega^m(X_1\cup_fX_2)_{\tilde{g}_2}\oplus_{comp}\Omega^m(X_3)$ respectively, with the compatibilities being with respect to $\tilde{g}_1$ and $\tilde{g}_2$. In this section we consider the spaces $\Omega^m(X_1\cup_fX_2)_{\tilde{g}_1}$ and $\Omega^m(X_1\cup_fX_2)_{\tilde{g}_2}$; for this, we need some further consideration and notions.

\paragraph{$\tilde{g}_1$-equivalent plots on $X_1\cup_fX_2$} We first introduce the following notion.

\begin{defn}
Two plots $p_1,p_1':U\to X_1$ are said to be \textbf{$f\cup g_1$-equivalent} if for all $u\in U$ the inequality $p_1(u)\neq p_1'(u)$ implies that $p_1(u),p_1'(u)\in Y\cup Z_1$ and the following are true:
\begin{itemize}
\item if $p_1(u),p_1'(u)\in Z_1$ then $g_1(p_1(u))=g_1(p_1'(u))$;
\item if $p_1(u)\in Y\setminus Z_1$ and $p_1'(u)\in Z_1$ then there exists $z\in Y\cap Z_1$ such that $f(z)=f(p_1(u))$ and $g_1(z)=g_1(p_1'(u))$;
\item symmetrically, if $p_1(u)\in Z_1$ and $p_1'(u)\in Y\setminus Z_1$ then there exists $z'\in Y\cap Z_1$ such that $f(z')=f(p_1'(u))$ and $g_1(z')=g_1(p_1(u))$;
\item if $p_1(u),p_1'(u)\in Y\setminus Z_1$ then either $f(p_1(u))=f(p_1'(u))$ or there exist $z,z'\in Y\cap Z_1$ such that $f(z)=f(p_1(u))$, $f(z')=f(p_1'(u))$, and $g_1(z)=g_1(z')$.
\end{itemize}
A form $\omega_1\in\Omega^m(X_1)$ is said to be \textbf{$f\cup g_1$-invariant} if for any two $f\cup g_1$-equivalent plots $p_1,p_1'$ we have $\omega_1(p_1)=\omega_1(p_1')$.
\end{defn}

\begin{example}
In Example 4, observe first that, as required, $f$ is a subduction and $g_1$ is such that $f(x_0,x_1)=f(x_0',x_1')$ $\Rightarrow$ $g_1(x_0,x_1)=g_1(x_0',x_1')$. We have that two plots $F_1,F_2$ of $X_1$ with the same domain of definition $U$ are $f\cup g_1$-equivalent iff $F_1(U_{\neq})=F_2(U_{\neq})\subseteq\{x_0x_1=0\}$ and for all $u\in U_{\neq}$ one of the following holds:
\begin{itemize}
\item $F_1^1(u)=F_2^1(u)=0$, $F_1^0(u),F_2^0(u)\in(0,\pi)$, and $F_2^0(u)=\pi-F_1^0(u)$;
\item $F_1^0(u)=F_2^1(u)=0$, $F_1^1(u),F_2^0(u)\in(0,\pi)$, and $F_2^0(u)=\pi-F_1^1(u)$;
\item $F_1^1(u)=F_2^0(u)=0$, $F_1^0(u),F_2^1(u)\in(0,\pi)$, and $F_2^1(u)=\pi-F_1^0(u)$;
\item $F_1^0(u)=F_2^0(u)=0$, $F_1^1(u),F_2^1(u)\in(0,\pi)$, and $F_2^1(u)=\pi-F_1^1(u)$.
\end{itemize}
In particular, there exist $f\cup g_1$-equivalent plots that are neither $f$-equivalent nor $g_1$-equivalent, for instance, this is the case of the pair $(0,\pi)\ni t\mapsto(t,0)$, $(0,\pi)\ni t\mapsto(0,\pi-t)$. It now follows that a $0$-form $h_1$ on $X_1$ is $f\cup g_1$-invariant iff $h_1(t,0)=h_1(0,t)\,\,\forall\,t>0$ and $h_1(t,0)=h_1(\pi-t,0)\,\,\forall t\in(0,\pi)$, and a $1$-form $h_{10}dx_0+h_{11}dx_1$ is $f\cup g_1$-invariant iff $h_{10}(t,0)=h_{11}(0,t)\,\,\forall\,t>0$ and $h_{10}(t,0)=-h_{10}(\pi-t,0)\,\,\forall t\in(0,\pi)$.
\end{example}

The definition of $f\cup g_1$-equivalent plots is in general quite cumbersome, but in certain specific cases it reduces to a much simpler case, as the following example shows.

\begin{example}\label{example5:ex}
Let $X_1,X_3$ be two-dimensional corners, let $X_2$ be a one-dimensional corner, let $Y,Z_1\subset X_1$ be given, respectively, by the equations $x_0x_1=0$ and $x_1=0$, and let $f:Y\to X_2$, $g_1:Z_1\to X_3$ be given by $f(t,0)=f(0,t)=t$, $g_1(t,0)=(0,t)$. In this case, it is easy to see that (due to injectivity of $g_1$) the $f\cup g_1$-equivalent plots are precisely the $f$-equivalent plots (which were already described in Example~\ref{f-equiv:ex}).
\end{example}

The space of all $f\cup g_1$-invariant forms will be denoted by $\Omega_{f\cup g_1}^m(X_1)$. We establish first of all the following property. 

\begin{lemma}
Any two $f$-equivalent plots are also $f\cup g_1$-equivalent.
\end{lemma}

\begin{proof}
Let $p_1,p_1':U\to X_1$ be two $f$-equivalent plots, and let $u$ be such that $p_1(u)\neq p_1'(u)$. Then $p_1(u),p_1'(u)\in Y\subseteq Y\cup Z_1$, and we have:
\begin{itemize}
\item if $p_1(u),p_1'(u)\in Z_1$ then $g_1(p_1(u))=g_1(p_1'(u))$ by $f$-equivalence and the assumption on $g_1$;
\item let $p_1(u)\in Y\setminus Z_1$ and $p_1'(u)\in Z_1$, and let $z=p_1'(u)\in Y\cap Z_1$. Then $f(z)=f(p_1(u))$ by $f$-equivalence, and $g_1(z)=g_1(p_1'(u))$ is trivial;
\item likewise, let $p_1(u)\in Z_1$ and $p_1'(u)\in Y\setminus Z_1$. Then we set $z'=p_1(u)$, obtaining $f(z')=f(p_1'(u))$ by $f$-equivalence and $g_1(z')=g_1(p_1(u))$ trivially;
\item finally, if $p_1(u),p_1'(u)\in Y\setminus Z_1$ then $f(p_1(u))=f(p_1'(u))$.
\end{itemize}
We thus obtain the desired conclusion.
\end{proof}

The following result clarifies now the meaning of the notion of $f\cup g_1$-equivalent plots.

\begin{lemma}\label{lem:2:25}
Two plots $p_1,p_1':U\to X_1$ are $f\cup g_1$-equivalent if and only if $\alpha_1\circ p_1,\alpha_1\circ p_1'$ are $\tilde{g}_1$-equivalent.
\end{lemma}

\begin{proof}
Let $p_1,p_1':U\to X_1$ be two $f\cup g_1$-equivalent plots, and let $u\in U$ be such that $\alpha_1(p_1(u))\neq\alpha_1(p_1'(u))$. Observe that this immediately implies that $p_1(u)\neq p_1'(u)$ and $f(p_1(u))\neq f(p_1'(u))$, therefore it follows from the definition of $f\cup g_1$-equivalent plots that $\alpha_1(p_1(u)),\alpha_1(p_1'(u))\in\alpha_1(Z_1)$. Consider now $\tilde{g}_1(\alpha_1(p_1(u)))$ and $\tilde{g}_1(\alpha_1(p_1'(u)))$. Since $f(p_1(u))\neq f(p_1'(u))$, it follows from the definition of $f\cup g_1$-equivalent plots that there always exist $z,z'\in Y\cap Z_1$, where we might have $z=p_1(u)$ and/or $z'=p_1'(u)$, such that $f(z)=f(p_1(u))$, $f(z')=f(p_1'(u))$, and $g_1(z)=g_1(z')$. Then it follows from the definition of $\tilde{g}_1$ and the assumption on $g_1$ that $\tilde{g}_1(\alpha_1(p_1(u)))=g_1(z)=g_1(z')=\tilde{g}_1(\alpha_1(p_1'(u)))$, hence $\alpha_1\circ p_1,\alpha_1\circ p_1'$ are $\tilde{g}_1$-equivalent.

Suppose now that $p_1,p_1':U\to X_1$ are two plots of $X_1$ such that $\alpha_1\circ p_1,\alpha_1\circ p_1'$ are $\tilde{g}_1$-equivalent, and let $u\in U$ be such that $p_1(u)\neq p_1'(u)$. Suppose first that $\alpha_1(p_1(u))=\alpha_1(p_1'(u))$; then $p_1(u),p_1'(u)\in Y\subseteq Y\cup Z_1$ by definition of map $\alpha_1$. Suppose now that $\alpha_1(p_1(u))\neq\alpha_1(p_1'(u))$; then $\alpha_1(p_1(u)),\alpha_1(p_1'(u))\in\alpha_1(Z_1)$, which immediately implies that $p_1(u),p_1'(u)\in Y\cup Z_1$. 

Now, keeping in mind that $\tilde{g}_1(\alpha_1(p_1(u)))=\tilde{g}_1(\alpha_1(p_1'(u)))$, we consider the following four cases:
\begin{itemize}
\item let $p_1(u),p_1'(u)\in Z_1$. Then the equality $\tilde{g}_1(\alpha_1(p_1(u)))=\tilde{g}_1(\alpha_1(p_1'(u)))$ implies that $g_1(p_1(u))=g_1(p_1'(u))$ as needed by definition of the map $\tilde{g}_1$;
\item let $p_1(u)\in Y\setminus Z_1$ and $p_1'(u)\in Z_1$. Since $\alpha_1(p_1(u))\in\alpha_1(Z_1)$, there exists $z\in Y\cap Z_1$ such that $f(z)=f(p_1(u))$ (that is, $\alpha_1(z)=\alpha_1(p_1(u))$). Thus, as before, $g_1(z)=\tilde{g}_1(\alpha_1(p_1(u)))=\tilde{g}_1(\alpha_1(p_1'(u)))=g_1(p_1'(u))$, as wanted;
\item let $p_1(u)\in Z_1$ and $p_1'(u)\in Y\setminus Z_1$. This case is considered exactly as the previous one;
\item finally, let $p_1(u),p_1'(u)\in Y\setminus Z_1$. If $\alpha_1(p_1(u))=\alpha_1(p_1'(u))$, we immediately obtain that $f(p_1(u))=f(p_1'(u))$; otherwise, since $p_1(u),p_1'(u)\in\alpha_1(Z_1)$, there exist $z,z'\in Y\cap Z_1$ such that $f(z)=f(p_1(u))$ and $f(z')=f(p_1'(u))$, so that as before $g_1(z)=\tilde{g}_1(\alpha_1(p_1(u)))=\tilde{g}_1(\alpha_1(p_1'(u)))=g_1(z')$.
\end{itemize}
We thus conclude that $p_1,p_1'$ are $f\cup g_1$-equivalent.
\end{proof}

Next, we need the following notion.

\begin{defn} 
Two plots $p_2,p_2':U\to X_2$ are said to be \textbf{$(\cup_f,g_1)$-equivalent} if for all $u\in U$ such that $p_2(u)\neq p_2'(u)$ we have that $p_2(u),p_2'(u)\in f(Y\cap Z_1)$, and there exist $z,z'\in Y\cap Z_1$ such that $f(z)=p_2(u)$, $f(z')=p_2'(u)$ and $g_1(z)=g_1(z')$. A form $\omega_2\in\Omega^m(X_2)$ is said to be \textbf{$(\cup_f,g_1)$-invariant} if for any two $(\cup_f,g_1)$-equivalent plots $p_2,p_2'$ we have $\omega_2(p_2)=\omega_2(p_2')$.
\end{defn}

\begin{example}
In Example 4, two plots $F_1,F_2$ of $X_2$ with the same domain of definition $U$ are $(\cup_f,g_1)$-equivalent iff $F_1(U_{\neq})=F_2(U_{\neq})\subset(0,\pi)$ and for all $u\in U_{\neq}$ we have $F_2(u)=\pi-F_1(u)$. It then follows that a $0$-form $h_2$ on $X_2$ is $(\cup_f,g_1)$-invariant iff $h_2(t)=h_2(\pi-t)$ $\forall t\in(0,\pi)$, and a $1$-form $h_2dt$ is $(\cup_f,g_1)$-invariant iff $h_2(t)=-h_2(\pi-t)$ $\forall t\in(0,\pi)$.
\end{example}

Again, the notion of $(\cup_f,g_1)$-equivalent plots becomes quite simpler in certain specific cases.

\begin{example}
In the case of Example~\ref{example5:ex}, any plot of $X_2$ is $(\cup_f,g_1)$-equivalent.
\end{example}

The space of all $(\cup_f,g_1)$-invariant forms in $\Omega^m(X_2)$ will be denoted by $\Omega_{\cup_f,g_1}^m(X_2)$. We can now obtain the following result.

\begin{lemma}\label{lem:2:27}
Two plots $p_2,p_2':U\to X_2$ are $(\cup_f,g_1)$-equivalent if and only if $\alpha_2\circ p_2,\alpha_2\circ p_2'$ are $\tilde{g}_1$-equivalent.
\end{lemma}

\begin{proof}
Let $p_2,p_2'$ be $(\cup_f,g_1)$-equivalent, and let $u$ be such that $\alpha_2(p_2(u))\neq \alpha_2(p_2'(u))$. Then of course $p_2(u)\neq p_2'(u)$, and by definition of $(\cup_f,g_1)$-equivalence there exist $z,z'\in Y\cap Z_1$ such that $f(z)=p_2(u)$, $f(z')=p_2'(u)$ and $g_1(z)=g_1(z')$. Observe first that $\alpha_1(z)=\alpha_2(f(z))$, $\alpha_1(z')=\alpha_2(f(z'))$ by construction; then we immediately obtain that $\alpha_2(p_2(u)),\alpha_2(p_2'(u))\in\alpha_1(Z_1)$. Furthermore, since $\alpha_1(z)=\alpha_2(p_2(u))$, $\alpha_1(z')=\alpha_2(p_2'(u))$, we have $\tilde{g}_1(\alpha_2(p_2(u)))=g_1(z)$, $\tilde{g}_1(\alpha_2(p_2'(u)))=g_1(z')$ by definition of $\tilde{g}_1$. Whence the equality $\tilde{g}_1(\alpha_2(p_2(u)))=\tilde{g}_1(\alpha_2(p_2'(u)))$.

Let now $p_2,p_2'$ be such that $\alpha_2\circ p_2,\alpha_2\circ p_2'$ are $\tilde{g}_1$-equivalent, and let $p_2(u)\neq p_2'(u)$. Since $\alpha_2$ is injective, $\alpha_2(p_2(u))\neq \alpha_2(p_2'(u))$, hence $\alpha_2(p_2(u)),\alpha_2(p_2'(u))\in\alpha_1(Z_1)$, and so $p_2(u),p_2'(u)\in f(Y\cap Z_1)$ by construction. In particular, there exist $z,z'\in Y\cap Z_1$ such that $f(z)=p_2(u)$, $f(z')=p_2'(u)$. Observe again that $\alpha_1(z)=\alpha_2(p_2(u))$ and $\alpha_1(z')=\alpha_2(p_2'(u))$, and so $g_1(z)=\tilde{g}_1(\alpha_2(p_2(u)))=\tilde{g}_1(\alpha_2(p_2'(u)))=g_1(z')$, hence $p_2,p_2'$ are $(\cup_f,g_1)$-equivalent.
\end{proof}

We also have the following observation.

\begin{lemma}\label{lem:2:28}
Let $p_1,p_1'$ are two plots of $X_1$ whose images are contained in $Y$. Then $p_1,p_1'$ are $f\cup g_1$-equivalent if and only if $f\circ p_1,f\circ p_1'$ are $(\cup_f,g_1)$-equivalent.
\end{lemma}

\begin{proof}
Let $p_1,p_1'$ be $f\cup g_1$-equivalent, and suppose that $f(p_1(u))\neq f(p_1'(u))$. Then of course $p_1(u)\neq p_1'(u)$, so that $p_1(u),p_1'(u)\in Y\cup Z_1$. Suppose first that $p_1(u),p_1'(u)\in Y\setminus Z_1$; then by $f\cup g_1$-equivalence there exist $z,z'\in Y\cap Z_1$ such that $f(z)=f(p_1(u))$, $f(z')=f(p_1'(u))$ and $g_1(z)=g_1(z')$, as wanted. Suppose now that $p_1(u)\in Y\setminus Z_1$ and $p_1'(u)\in Z_1$. Then $p_1'(u)\in Y\cap Z_1$, and by $f\cup g_1$-equivalence there exists $z\in Y\cap Z_1$ such that $f(z)=f(p_1(u))$ and $g_1(z)=g_1(p_1'(u))$, and it suffices to set $z'=p_1'(u)$. The case $p_1(u)\in Z_1$, $p_1'(u)\in Y\setminus Z_1$ is of course completely analogous. Finally, suppose $p_1(u),p_1'(u)\in Z_1$; then $p_1(u),p_1'(u)\in Y\cap Z_1$, and we can set $z=p_1(u)$, $z'=p_1'(u)$. Thus, $f\circ p_1$, $f\circ p_1'$ are $(\cup_f,g_1)$-equivalent.

\emph{Vice versa}, suppose $p_1,p_1'$ are such that $f\circ p_1$, $f\circ p_1'$ are $(\cup_f,g_1)$-equivalent, and let $p_1(u)\neq p_1'(u)$. Recall that we have $p_1(u),p_1'(u)\in Y\subseteq Y\cup Z_1$ by choice of $p_1,p_1'$. Suppose first that $p_1(u),p_1'(u)\in Y\setminus Z_1$. If $f(p_1(u))=f(p_1'(u))$, there is nothing to prove; if $f(p_1(u))\neq f(p_1'(u))$ then by $(\cup_f,g_1)$-equivalence  $f(p_1(u)),f(p_1'(u))\in f(Y\cap Z_1)$ and there exist $z,z'\in Y\cap Z_1$ such that $f(z)=f(p_1(u))$, $f(z')=f(p_1'(u))$ and $g_1(z)=g_1(z')$, as needed. Suppose now that $p_1(u)\in Y\setminus Z_1$ and $p_1'(u)\in Z_1$, hence $p_1'(u)\in Y\cap Z_1$. If $f(p_1(u))=f(p_1'(u))$, it suffices to set $z=p_1'(u)$ to obtain the condition needed for $f\cup g_1$-equivalence. If $f(p_1(u))\neq f(p_1'(u))$, by $(\cup_f,g_1)$-equivalence there exist $z,z'\in Y\cap Z_1$ such that $f(z)=f(p_1(u))$, $f(z')=f(p_1'(u))$ and $g_1(z)=g_1(z')$. Since by the assumption on $g_1$ $f(z')=f(p_1'(u))$ implies $g_1(z')=g_1(p_1'(u))$, the $z$ is the one required by the definition of $f\cup g_1$-equivalence. The case $p_1(u)\in Z_1$, $p_1'(u)\in Y\setminus Z_1$ is of course completely analogous. Finally, let $p_1(u),p_1'(u)\in Z_1$, then by assumption $p_1(u),p_1'(u)\in Y\cap Z_1$. If $f(p_1(u))=f(p_1'(u))$, we also have $g_1(p_1(u))=g_1(p_1'(u))$. If $f(p_1(u))\neq f(p_1'(u))$ then by $(\cup_f,g_1)$-equivalence there exist $z,z'\in Y\cap Z_1$ such that $f(z)=f(p_1(u))$ (so that $g_1(z)=g_1(p_1(u))$), $f(z')=f(p_1'(u))$ (so that $g_1(z')=g_1(p_1'(u))$) and $g_1(z)=g_1(z')$, and we immediately conclude that $g_1(p_1(u))=g_1(p_1'(u))$. Thus, $p_1,p_1'$ are $f\cup g_1$-equivalent.
\end{proof}

\paragraph{$\tilde{g}_2$-equivalent plots on $X_1\cup_fX_2$} Next, we consider the following notion.

\begin{defn}
Two plots $p_1,p_1':U\to X_1$ are said to be \textbf{$(\cup_f,g_2)$-equivalent} if for all $u\in U$ such that $p_1(u)\neq p_1'(u)$ either $p_1(u),p_1'(u)\in Y$ and  $f(p_1(u))=f(p_1'(u))$ or $p_1(u),p_1'(u)\in f^{-1}(Z_2)$ and $g_2(f(p_1(u)))=g_2(f(p_1'(u)))$. A form $\omega_1\in\Omega^m(X_1)$ is said to be \textbf{$(\cup_f,g_2)$-invariant} if for any two $(\cup_f,g_2)$-equivalent plots $p_1,p_1'$ we have $\omega_1(p_1)=\omega_1(p_1')$.
\end{defn}

\begin{example}
In Example 4, two plots $F_1,F_2$ of $X_1$ with the same domain of definition $U$ are $(\cup_f,g_2)$-equivalent if $F_1(U_{\neq})=F_2(U_{\neq})\subseteq\{x_0x_1=0\}$ and for all $u\in U_{\neq}$ one of the following occurs:
\begin{itemize}
\item $F_1^0(u)F_1^1(u)=F_2^0(u)F_2^1(u)=0$ and $F_2^0(u)=F_1^1(u)$, $F_2^1(u)=F_1^0(u)$;
\item $F_1^0(u)=0$, $F_1^1(u)\in(0,2)$, $F_2^0(u)=0$, and $F_2^1(u)=2-F_1^1(u)$;
\item $F_1^0(u)=0$, $F_1^1(u)\in(0,2)$, $F_2^1(u)=0$, and $F_2^0(u)=2-F_1^1(u)$;
\item $F_1^0(u)\in(0,2)$, $F_1^1(u)=0$, $F_2^0(u)=0$, and $F_2^1(u)=2-F_1^0(u)$;
\item $F_1^0(u)\in(0,2)$, $F_1^1(u)=0$, $F_2^1(u)=0$, and $F_2^0(u)=2-F_1^0(u)$.
\end{itemize}
It then follows that a $0$-form $h_1$ on $X_1$ is $(\cup_f,g_2)$-invariant iff $h_1(t,0)=h_1(0,t)\,\,\forall t>0$ and $h_1(t,0)=h_1(2-t,0)\,\,\forall t\in(0,2)$, and a $1$-form $h_{10}dx_0+h_{11}dx_1$ is $(\cup_f,g_2)$-invariant iff $h_{10}(t,0)=h_{11}(0,t)\,\,\forall t>0$ and $h_{10}(t,0)=-h_{10}(2-t,0)\,\,\forall t\in(0,2)$.
\end{example}

The space of all $(\cup_f,g_2)$-invariant forms in $\Omega^m(X_1)$ will be denoted by $\Omega_{\cup_f,g_2}^m(X_1)$. Notice that, since any two $f$-equivalent plots are automatically $(\cup_f,g_2)$-equivalent, we have an \emph{a priori} proper inclusion $\Omega_{\cup_f,g_2}^m(X_1)\subseteq\Omega_f^m(X_1)$. In the case when $f(Y)\cap Z_2=\emptyset$, this inclusion is of course an equality. 

\begin{lemma}\label{lem:2:30}
Two plots $p_1,p_1':U\to X_1$ are \textbf{$(\cup_f,g_2)$-equivalent} if and only if $\alpha_1\circ p_1,\alpha_1\circ p_1'$ are $\tilde{g}_2$-equivalent.
\end{lemma}

\begin{proof}
Let $p_1,p_1'$ be $(\cup_f,g_2)$-equivalent, and let $\alpha_1(p_1(u))\neq\alpha_1(p_1'(u))$. Then in particular $f(p_1(u))\neq f(p_1'(u))$, and hence $p_1(u),p_1'(u)\in f^{-1}(Z_2)$, and so $\alpha_1(p_1(u)),\alpha_1(p_1'(u))\in \alpha_2(Z_2)$. Let us now consider $\tilde{g}_2(\alpha_1(p_1(u)))$ and $\tilde{g}_2(\alpha_1(p_1'(u)))$. Since $p_1(u)\in f^{-1}(Z_2)$, we have $\alpha_1(p_1(u))=\alpha_2(f(p_1(u)))$, and so $\tilde{g}_2(\alpha_1(p_1(u)))=g_2(f(p_1(u)))$ by definition of $\tilde{g}_2$; likewise, $\tilde{g}_2(\alpha_1(p_1'(u)))=g_2(f(p_1'(u)))$. Since $g_2(f(p_1(u)))=g_2(f(p_1'(u)))$ by $(\cup_f,g_2)$-equivalence, we obtain that $\tilde{g}_2(\alpha_1(p_1(u)))=\tilde{g}_2(\alpha_1(p_1'(u)))$, so $\alpha_1\circ p_1,\alpha_1\circ p_1'$ are $\tilde{g}_2$-equivalent.

\emph{Vice versa}, let $p_1,p_1':U\to X_1$ be such that $\alpha_1\circ p_1,\alpha_1\circ p_1'$ are $\tilde{g}_2$-equivalent, and let $p_1(u)\neq p_1'(u)$. Suppose first that $\alpha_1\circ p_1=\alpha_1\circ p_1'$. Then by definition of the map $\alpha_1$ we have $p_1(u),p_1'(u)\in Y$ and $f(p_1(u))=f(p_1'(u))$. Suppose now that $\alpha_1\circ p_1\neq\alpha_1\circ p_1'$. Then $\alpha_1\circ p_1,\alpha_1\circ p_1'\in \alpha_2(Z_2)$, and so, since $\alpha_2$ is injective and $\alpha_1|_Y=\alpha_2\circ f$, we have $p_1(u),p_1'(u)\in f^{-1}(Z_2)$ and $\alpha_1(p_1(u))=\alpha_2(f(p_1(u)))$, $\alpha_1(p_1'(u))=\alpha_2(f(p_1'(u)))$. Thus the equality $\tilde{g}_2(\alpha_1(p_1(u)))=\tilde{g}_2(\alpha_1(p_1'(u)))$ implies $g_2(f(p_1(u)))=g_2(f(p_1'(u)))$ by definition of $\tilde{g}_2$. Hence $p_1,p_1'$ are $(\cup_f,g_2)$-equivalent.
\end{proof}

Note that we also have the following observation.

\begin{lemma}\label{lem:2:31}
Let $p_1,p_1'$ be two plots of $X_1$ whose images are contained in $Y$. Then $p_1,p_1'$ are $(\cup_f,g_2)$-equivalent if and only if $f\circ p_1,f\circ p_1'$ are $g_2$-equivalent.
\end{lemma}

\begin{proof}
Suppose that $p_1,p_1'$ are $(\cup_f,g_2)$-equivalent, and let $f(p_1(u))\neq f(p_1'(u))$. Then $f(p_1(u)),f(p_1'(u))\in Z_2$ and $g_2(f(p_1(u)))=g_2(f(p_1'(u)))$ by definition of $(\cup_f,g_2)$-equivalence.

\emph{Vice versa}, let $p_1,p_1'$ be such that $f\circ p_1,f\circ p_1'$ are $g_2$-equivalent, and let $p_1(u)\neq p_1'(u)$. If $f(p_1(u))=f(p_1'(u))$, there is nothing to prove. Suppose $f(p_1(u))\neq f(p_1'(u))$; then $f(p_1(u)),f(p_1'(u))\in Z_2$ and $g_2(f(p_1(u)))=g_2(f(p_1'(u)))$, hence $p_1,p_1'$ are $(\cup_f,g_2)$-equivalent.
\end{proof}

Finally, we will need the following statement.

\begin{lemma}\label{lem:2:32}
Two plots $p_2,p_2':U\to X_2$ are $g_2$-equivalent if and only if $\alpha_2\circ p_2,\alpha_2\circ p_2'$ are $\tilde{g}_2$-equivalent.
\end{lemma}

\begin{proof}
Let $p_2,p_2'$ be $g_2$-equivalent, and suppose $\alpha_2(p_2(u))\neq \alpha_2(p_2'(u))$. Then $p_2(u)\neq p_2'(u)$, hence $p_2(u),p_2'(u)\in Z_2$ and $g_2(p_2(u))=g_2(p_2'(u))$, therefore $\alpha_2(p_2(u)),\alpha_2(p_2'(u))\in \alpha_2(Z_2)$ and $\tilde{g}_2(\alpha_2(p_2(u)))=\tilde{g}_2(\alpha_2(p_2'(u)))$, so $\alpha_2\circ p_2,\alpha_2\circ p_2'$ are $\tilde{g}_2$-equivalent. Since $\alpha_2$ is invertible, the \emph{vice versa} can be obtained by reversing the order of the argument.
\end{proof}

\paragraph{Invariant forms on $X_1\cup_fX_2$} We now establish the main result of this section.

\begin{thm}\label{f,g-inv:iterat:thm}
There are the following diffeomorphisms: $$\Omega_{\tilde{g}_1}^m(X_1\cup_fX_2)\cong\Omega_{f\cup g_1}^m(X_1)\oplus_{comp}\Omega_{\cup_f,g_1}^m(X_2)$$ and $$\Omega_{\tilde{g}_2}^m(X_1\cup_fX_2)\cong\Omega_{\cup_f,g_2}^m(X_1)\oplus_{comp}\Omega_{g_2}^m(X_2),$$ where in both cases the compatibility is with respect to the map $f$. The two diffeomorphisms are given by the appropriate restrictions of the map $\pi^*$.
\end{thm}

\begin{proof}
Let us prove the existence of the first diffeomorphism. Let $\omega=\omega_1\cup_f\omega_2\in\Omega_{\tilde{g}_1}^m(X_1\cup_fX_2)$, let $p_1,p_1'$ be two $f\cup g_1$-equivalent plots of $X_1$, and let $p_2,p_2'$ be $(\cup_f,g_1)$-equivalent plots of $X_2$. Then by Lemmas~\ref{lem:2:25} and \ref{lem:2:27} $\alpha_1\circ p_1,\alpha_1\circ p_1'$ are $\tilde{g}_1$-equivalent, and so are $\alpha_2\circ p_2,\alpha_2\circ p_2'$. Thus, $\omega_1(p_1)=\omega(\alpha_1\circ p_1)=\omega(\alpha_1\circ p_1')=\omega_1(p_1')$, and $\omega_2(p_2)=\omega(\alpha_2\circ p_2)=\omega(\alpha_2\circ p_2')=\omega_2(p_2')$. Hence $\omega_1\in\Omega_{f\cup g_1}^m(X_1)$ and $\omega_2\in\Omega_{\cup_f,g_1}^m(X_2)$.

\emph{Vice versa}, let $\omega_1\in\Omega_{f\cup g_1}^m(X_1)$, $\omega_2\in\Omega_{\cup_f,g_1}^m(X_2)$ be $f$-compatible, and let $\omega=\omega_1\cup_f\omega_2$. Let $p,p'$ be $\tilde{g}_1$-equivalent plots of $X_1\cup_fX_2$; assume, as we can, that their domain of definition is small enough so that each of them lifts either to a plot of $X_1$ or one of $X_2$. Suppose first that both $p$ and $p'$ lift to plots $p_1,p_1'$ of $X_1$, $p=\alpha_1\circ p_1$, $p'=\alpha_1\circ p_1'$. Then by Lemma~\ref{lem:2:25} $p_1,p_1'$ are $f\cup g_1$-equivalent, and so $\omega(p)=\omega_1(p_1)=\omega_1(p_1')=\omega(p')$. Likewise, suppose that both $p,p'$ lift to plots $p_2,p_2'$ of $X_2$, $p=\alpha_2\circ p_2$, $p'=\alpha_2\circ p_2'$; then by Lemma~\ref{lem:2:27} $p_2,p_2'$ are $(\cup_f,g_1)$-equivalent, and we have $\omega(p)=\omega_2(p_2)=\omega_2(p_2')=\omega(p')$. Finally, suppose that one of the plots, say $p$, lifts to a plot $p_1$ of $X_1$, $p=\alpha_1\circ p_1$, while the other lifts to a plot $p_2'$ of $X_2$, $p'=\alpha_2\circ p_2'$. Then it follows from $\tilde{g}_1$-equivalence that the image of $p_2'$ is contained in $f(Y)$. Since $f$ is a subduction, there then locally exists a plot $p_1'$ of $Y$ such that $p_2'=f\circ p_1'$. Since $p'=\alpha_2\circ p_2'$ and $\alpha_1|_Y=\alpha_2\circ f$, we have $p'=\alpha_1\circ p_1'$. Thus, by Lemma~\ref{lem:2:25} $p_1,p_1'$ are $f\cup g_1$-equivalent, and again $\omega(p)=\omega_1(p_1)=\omega_1(p_1')=\omega(p')$. Thus, the existence of the first diffeomorphism is proven.

Let us now prove the existence of the second diffeomorphism. Let $\omega=\omega_1\cup_f\omega_2\in\Omega_{\tilde{g}_2}$, let $p_1,p_1'$ be two $(\cup_f,g_2)$-equivalent plots, and let $p_2,p_2'$ be two $g_2$-equivalent plots. Then $\omega_1(p_1)=\omega(\alpha_1\circ p_1)=\omega(\alpha_1\circ p_1')=\omega_1(p_1')$ and $\omega_2(p_2)=\omega(\alpha_2\circ p_2)=\omega(\alpha_2\circ p_2')=\omega_2(p_2')$, so $\omega_1\in\Omega_{\cup_f,g_2}^m(X_1)$ and $\omega_2\in\Omega_{g_2}^m(X_2)$.

\emph{Vice versa}, let $\omega_1\in\Omega_{\cup_f,g_2}^m(X_1)$, $\omega_2\in\Omega_{g_2}^m(X_2)$ be $f$-compatible, and let $\omega=\omega_1\cup_f\omega_2$. Let $p,p'$ be two $\tilde{g}_2$-equivalent plots of $X_1\cup_fX_2$; assume again that their common domain of definition is such that each of them lifts either to a plot of $X_1$ or one of $X_2$. Suppose that both $p,p'$ lift to plots $p_1,p_1'$ of $X_1$, $p=\alpha_1\circ p_1$, $p'=\alpha_1\circ p_1'$. Then by Lemma~\ref{lem:2:30} $p_1,p_1'$ are $(\cup_f,g_2)$-equivalent, and we have $\omega(p)=\omega_1(p_1)=\omega_1(p_1')=\omega(p')$. Likewise, suppose that both $p,p'$ lift to plots $p_2,p_2'$ of $X_2$, $p=\alpha_2\circ p_2$, $p'=\alpha_2\circ p_2'$; then by Lemma~\ref{lem:2:32} $p_2,p_2'$ are $g_2$-equivalent, and we have $\omega(p)=\omega_2(p_2)=\omega_2(p_2')=\omega(p')$. Finally, suppose that one of the plots, say $p$, lifts to a plot $p_1$ of $X_1$, $p=\alpha_1\circ p_1$, while the other lifts to a plot $p_2'$ of $X_2$, $p'=\alpha_2\circ p_2'$. Then it follows from $\tilde{g}_2$-equivalence of $p,p'$ that the image of $p_1$ is contained in $Y$, and since $f$ is smooth, $p_2=f\circ p_1$ is a plot of $X_2$. Since $\alpha_1|_Y=\alpha_2\circ f$, we have $p=\alpha_2\circ p_2$, hence by Lemma~\ref{lem:2:32} $p_2,p_2'$ are $g_2$-equivalent, and so we have again $\omega(p)=\omega_2(p_2)=\omega_2(p_2')=\omega(p')$. All cases having been exhausted, the theorem is proven.
\end{proof}

The theorem just proven indicates that the behavior of the invariance under gluing maps is in general rather cumbersome under iterated gluings.

\subsubsection{Compatibility and iterated gluings} 

Let now consider the behavior of compatibility under iterated gluings. Let $\omega_i\in\Omega^m(X_i)$ for $i=1,2,3$, where $\omega_1$ and $\omega_2$ are $f$-compatible (\emph{i.e.}, such that $i^*\omega_1=f^*j^*\omega_2$); recall the natural inclusions $\bar{k}_1$, $\bar{k}_2$, and the equalities $\alpha_1\circ k_1=\bar{k}_1\circ\alpha_1|_{Z_1}$, $\bar{k_2}=\alpha_2\circ k_2\circ (\alpha_2|_{Z_2})^{-1}$. We first establish the following auxiliary result.

\begin{lemma}
Let $p'$ be a plot of either $\alpha_1(Z_1)$ or of $\alpha_2(Z_2)$. Then:
\begin{enumerate}
\item if $p'$ is a plot of $\alpha_1(Z_1)$ and $p_1$ is its lift to a plot of $Z_1$ then $\bar{k}_1^*(\omega_1\cup_f\omega_2)(p')=k_1^*\omega_1(p_1)$;
\item if $p'$ is a plot of $\alpha_2(Z_2)$ and $p_2$ is its lift to a plot of $Z_2$ then $\bar{k}_2^*(\omega_1\cup_f\omega_2)(p')=k_2^*\omega_2(p_2)$.
\end{enumerate}
\end{lemma}

\begin{proof}
The lemma follows from the following two commutative diagrams:
$$
\begin{CD}
U @>p_1>> Z_1 @>k_1>> X_1\\
@\vert @V\alpha_1|_{Z_1}VV @V\alpha_1VV \\
U @>p'>> \alpha_1(Z_1) @>\bar{k}_1>> X_1\cup_fX_2
\end{CD},\,\,\,\,\,\,\,
\begin{CD}
U @>p_2>> Z_2 @>k_2>> X_2\\
@\vert @V\alpha_2|_{Z_2}VV @V\alpha_2VV \\
U @>p'>> \alpha_2(Z_2) @>\bar{k}_2>> X_1\cup_fX_2
\end{CD}.
$$
\end{proof}

We can now obtain the following.

\begin{prop}\label{comp:iter:prop}
We have:
\begin{enumerate}
\item The form $\omega_1\cup_f\omega_2$ is $\tilde{g}_1$-compatible with $\omega_3$ if and only if $\omega_3$ is $g_1$-compatible with $\omega_1$;
\item The form $\omega_1\cup_f\omega_2$ is $\tilde{g}_2$-compatible with $\omega_3$ if and only if $\omega_3$ is $g_2$-compatible with $\omega_2$.
\end{enumerate}
\end{prop}

\begin{proof}
Let us prove 1. Suppose first that $\omega_1$ and $\omega_3$ are $g_1$-compatible, $k_1^*\omega_1=g_1^*l_1^*\omega_3$; let us show that $\bar{k}_1^*(\omega_1\cup_f\omega_2)=\tilde{g}_1^*l_1^*\omega_3$. Let $p':U\to\alpha_1(Z_1)$ be a plot; we can again assume that $U$ is small enough so that $p'$ lifts either to a plot $p_1'$ of $Z_1$ or to a plot $p_2'$ of $X_2$. In this latter case, furthermore, $p_2'$ takes values in $f(Y)$, and since $f$ is assumed to be a subduction, it locally lifts to a plot of $X_1$ with values in $Y\cap Z_1$; thus, $p'$ always locally lifts to a plot $p_1'$ of $Z_1$, and we have $\bar{k}_1^*(\omega_1\cup_f\omega_2)(p')=k_1^*\omega_1(p_1')=g_1^*l_1^*\omega_3(p_1')$.

Consider now $\tilde{g}_1^*l_1^*\omega_3(p')$. Since $g_1^*=\alpha_1^*\tilde{g}_1^*$, we have $g_1^*l_1^*\omega_3(p_1')=\alpha_1^*\tilde{g}_1^*l_1^*\omega_3(p_1')=\tilde{g}_1^*l_1^*\omega_3(p')$, hence the desired equality $\bar{k}_1^*(\omega_1\cup_f\omega_2)=\tilde{g}_1^*l_1^*\omega_3$.

Suppose now that $\omega_1\cup_f\omega_2$, $\omega_3$ are $\tilde{g}_1$-compatible; let us show that $k_1^*\omega_1=g_1^*l_1^*\omega_3$. Let $p:U\to Z_1$ be a plot of $Z_1$; then of course $p'=\alpha_1\circ p$ is a plot of $\alpha_1(Z_1)$, and we have $k_1^*\omega_1(p)=\bar{k}_1^*(\omega_1\cup_f\omega_2)(p')=\tilde{g}_1^*l_1^*\omega_3(p')$. Finally, we have $g_1^*l_1^*\omega_3(p)=\alpha_1^*\tilde{g}_1^*l_1^*\omega_3(p)=\tilde{g}_1^*l_1^*\omega_3(p')$, which yields the desired equality $k_1^*\omega_1=g_1^*l_1^*\omega_3$.

Let us now prove 2. Suppose first that that $\omega_2$ and $\omega_3$ are $g_2$-compatible, $k_2^*\omega_2=g_2^*l_2^*\omega_3$, and let us show that $\bar{k}_2^*(\omega_1\cup_f\omega_2)=\tilde{g}_2^*l_2\omega_3$. Let $p':U\to \alpha_2(Z_2)$ be a plot with $U$ such that $p'$ lifts either to a plot $p_2'$ of $Z_2$ or a plot $p_1'$ of $X_1$; in this latter case $p_1'$ is a plot of $Y\cap f^{-1}(Z_2)$, and so $p_2'=f\circ p_1'$ is a plot of $Z_2$ that is a lift of $p'$. We thus have in both cases $\bar{k}_2^*(\omega_1\cup_f\omega_2)(p')=k_2^*\omega_2(p_2')=\alpha_2^*g_2^*l_2^*\omega_2(p_2')$ by assumption. On the other hand, $\tilde{g}_2^*l_2^*\omega_3(p')=(\alpha_2^{-1})^*g_2^*l_2^*\omega_3(p')=g_2^*l_2^*\omega_3(p_2')$, hence the desired equality $\bar{k}_2^*(\omega_1\cup_f\omega_2)=\tilde{g}_2^*l_2\omega_3$.

Assume now this latter equality, and let us show that $k_2^*\omega_2=g_2^*l_2^*\omega_3$. Let $p_2:U\to Z_2$ be a plot; denote $p=\alpha_2\circ p_2$. Then $k_2^*\omega_2(p_2)=\bar{k}_2^*(\omega_1\cup_f\omega_2)(p)=\tilde{g}_2^*l_2\omega_3(p)$ by assumption. On the other hand, since $g_2=\tilde{g}_2\circ \alpha_2$, $g_2^*l_2^*\omega_3(p_2)=\alpha_2^*\tilde{g}_2^*l_2^*\omega_3(p_2)=\tilde{g}_2^*l_2\omega_3(p)$, hence the desired equality. 

All cases having been considered, the entire statement is proven.
\end{proof}

We thus obtain the following statement.

\begin{cor}\label{omega:iterat:cor}
The space $\Omega^m((X_1\cup_fX_2)\cup_{\tilde{g}_1}X_3)$ is diffeomorphic to 
\begin{flushleft}
$\{(\omega_1,\omega_2,\omega_3)\,|\,\omega_1\in\Omega_{f\cup g_1}^m(X_1),\omega_2\in\Omega_{\cup_f,g_1}^m(X_2),\omega_3\in\Omega^m(X_3),i^*\omega_1=f^*j^*\omega_2,\,k_1^*\omega_1=g_1^*l_1^*\omega_3\}\leqslant$
\end{flushleft}
\begin{flushright}
$\leqslant\Omega^m(X_1)\oplus\Omega^m(X_2)\oplus\Omega^m(X_3),$
\end{flushright}
and the space $\Omega^m((X_1\cup_fX_2)\cup_{\tilde{g}_2}X_3)$ is diffeomorphic to 
\begin{flushleft}
$\{(\omega_1,\omega_2,\omega_3)\,|\,\omega_1\in\Omega_{\cup_f,g_2}^m(X_1),\omega_2\in\Omega_{g_2}^m(X_2),\omega_3\in\Omega^m(X_3),i^*\omega_1=f^*j^*\omega_2,\,k_2^*\omega_2=g_2^*l_2^*\omega_3\}\leqslant$
\end{flushleft}
\begin{flushright}
$\leqslant\Omega^m(X_1)\oplus\Omega^m(X_2)\oplus\Omega^m(X_3)$.
\end{flushright}
\end{cor}

\begin{proof}
Let us consider $\Omega^m((X_1\cup_fX_2)\cup_{\tilde{g}_1}X_3)$. By Corollary~\ref{omega:of:glued:cor} this space is diffeomorphic to $$\Omega_{\tilde{g}_1}^m(X_1\cup_fX_2)\oplus_{comp}\Omega^m(X_3);$$ by Theorem~\ref{f,g-inv:iterat:thm} the space $\Omega_{\tilde{g}_1}^m(X_1\cup_fX_2)$ is diffeomorphic to $\Omega_{f\cup g_1}^m(X_1)\oplus_{comp}\Omega_{\cup_f,g_1}^m(X_2)$, and the first claim follows from Proposition~\ref{comp:iter:prop}.

Consider now $\Omega^m((X_1\cup_fX_2)\cup_{\tilde{g}_2}X_3)$. By Corollary~\ref{omega:of:glued:cor} it is diffeomorphic to $$\Omega_{\tilde{g}_2}^m((X_1\cup_fX_2)\oplus_{comp}\Omega^m(X_3),$$ and by Theorem~\ref{f,g-inv:iterat:thm} $\Omega_{\tilde{g}_2}^m(X_1\cup_fX_2)$ is diffeomorphic to $\Omega_{\cup_f,g_2}^m(X_1)\oplus_{comp}\Omega_{g_2}^m(X_2)$, so the second claim follows again from Proposition~\ref{comp:iter:prop}.
\end{proof}

\begin{example}
In Example 4, we have that the space $\Omega^0((X_1\cup_fX_2)\cup_{\tilde{g}_1}X_3)$ consists of triples $(h_1,h_2,h_3)$, where $h_1:(0,+\infty)\times(0,+\infty)\to\mathbb{R}$ is a smooth function that can be extended to a smooth $\mathbb{R}^2\to\mathbb{R}$ and $h_2,h_3:(0,+\infty)\to\mathbb{R}$ are smooth functions that can be extended to smooth $\mathbb{R}\to\mathbb{R}$, such that $$h_1(t,0)=h_1(0,t)=h_2(t)\,\,\forall t>0,\,\,\,\,h_1(t,0)=h_3(\sin t)\,\,\forall t\in(0,\pi),$$ the space $\Omega^1((X_1\cup_fX_2)\cup_{\tilde{g}_1}X_3)$ consists of triples $(h_{10}dx_0+h_{11}dx_1,h_2dt,h_3ds)$, where $h_{10},h_{11}:(0,+\infty)\times(0,+\infty)\to\mathbb{R}$ are smooth functions that can be extended to smooth maps $\mathbb{R}^2\to\mathbb{R}$ and $h_2,h_3$ are as above, such that $$h_{10}(t,0)=h_{11}(0,t)=h_2(t)\,\,\forall t>0,\,\,\,\,h_{10}(t,0)=-h_{10}(\pi-t,0)=\cos t\cdot h_3(\sin t)\,\,\forall t\in(0,\pi),$$ and the space $\Omega^2((X_1\cup_fX_2)\cup_{\tilde{g}_1}X_3)$ coincides with $\Omega^2(X_1)$, which in turn coincides with the space of all smooth functions $(0,+\infty)\times(0,+\infty)\to\mathbb{R}$ that can be extended to a smooth map $\mathbb{R}^2\to\mathbb{R}$.

For the space $\Omega^m((X_1\cup_fX_2)\cup_{\tilde{g}_2}X_3)$ we have that $\Omega^0((X_1\cup_fX_2)\cup_{\tilde{g}_2}X_3)$ consists of triples $(h_1,h_2,h_3)$, where $h_1,h_2,h_3$ are as above, such that $$h_1(t,0)=h_1(0,t)=h_2(t)\,\,\forall t>0,\,\,\,\,h_2(t)=h_3((t-1)^2)\,\,\forall t\in(0,2),$$ the space $\Omega^1((X_1\cup_fX_2)\cup_{\tilde{g}_2}X_3)$ consists of triples $(h_{10}dx_0+h_{11}dx_1,h_2dt,h_3ds)$, where $h_{10},h_{11},h_2,h_3$ are as above, such that $$h_{10}(t,0)=h_{11}(0,t)=h_2(t)\,\,\forall t>0,\,\,\,\,h_2(t)=-h_2(2-t)=2(t-1)h_3((t-1)^2)\,\,\forall t\in(0,2),$$ and the space $\Omega^2((X_1\cup_fX_2)\cup_{\tilde{g}_2}X_3)$ coincides with the space of $2$-forms on $X_1$, which in turn coincides with the space of all smooth functions $(0,+\infty)\times(0,+\infty)\to\mathbb{R}$ that can be extended to a smooth map $\mathbb{R}^2\to\mathbb{R}$. Furthermore, using constructions similar to those of Example~\ref{spaces:one:glue:ex}(4) one can see that $\Omega^0((X_1\cup_fX_2)\cup_{\tilde{g}_i}X_3)$, $i=1,2$, is diffeomorphic to a certain vector subspace of $C^\infty([0,+\infty)^2)\oplus C^\infty([0,+\infty))$ (the specific subspaces are given by the above conditions on $h_1,h_2,h_3$) and $\Omega^1((X_1\cup_fX_2)\cup_{\tilde{g}_i}X_3)$, $i=1,2$, is diffeomorphic to a certain vector subspace of $C^\infty([0,+\infty)^2)\oplus C^\infty([0,+\infty)^2)\oplus C^\infty([0,+\infty))$.

In the case of Example~\ref{example5:ex}, recall (Example~\ref{compty:ex}) that two $0$-forms $h_1,h_2$ are $f$-compatible iff $h_1(t,0)=h_1(0,t)=h_2(t)\,\forall t>0$ (so in particular $h_1$ is $f$-invariant), and two $1$-forms $h_{10}dx_0+h_{11}dx_1$, $h_2dt$ are $f$-compatible iff $h_{10}(t,0)=h_{11}(0,t)=h_2(t)$, $h_{10}(0,t)=h_{11}(t,0)$ $\forall t>0$ (so in particular, $h_{10}dx_0+h_{11}dx_1$ is $f$-invariant). It is also easy to see that two $0$-forms $h_1,h_3$ are $g_1$-compatible iff $h_1(t,0)=h_3(0,t)\,\forall t>0$, and two $1$-forms $h_{10}dx_0+h_{11}dx_1$, $h_{30}dy_0+h_{31}dy_1$ are $g_1$-compatible iff $h_{10}(t,0)=h_{31}(0,t)\,\forall t>0$. Thus, the space $$\Omega^0((X_1\cup_fX_2)\cup_{\tilde{g}_1}X_3)$$ consists of triples $(h_1,h_2,h_3)$ such that $$h_1(t,0)=h_1(0,t)=h_2(t)=h_3(0,t)\,\forall t>0,$$ whereas the space $$\Omega^1((X_1\cup_fX_2)\cup_{\tilde{g}_1}X_3)$$ consists of triples $(h_{10}dx_0+h_{11}dx_1,h_2dt,h_{30}dy_0+h_{31}dy_1)$ such that $$h_{10}(t,0)=h_{11}(0,t)=h_2(t)=h_3(0,t),\,\,h_{10}(0,t)=h_{11}(t,0)\,\,\forall t>0.$$
\end{example}

\subsubsection{Condition $\mathcal{D}_f^{i^*}(Y)=\mathcal{D}^{f^*j^*}(Y)$ under iterated gluings} 

In the case of iterated gluings along non-injective maps, we shall need more sophisticated versions of this kind of condition. We introduce a general type of notation $\mathcal{D}_{inv}^{r^*}(T)$ to denote the pushforward by the map $r^*$ of the usual (subset) diffeology on either $\Omega_{inv}^m(S)$ or $\Omega_{inv}^m(S')$, where $r$ is either the inclusion of $T$ into its ambient space $S$ or else it is the composition of a gluing map $h$ defined on $T$ with the inclusion of $r(T)$ into its ambient space $S'$; finally, the lower index $inv$ describes the appropriate invariance condition. Spelling this out, we have:
\begin{center}
\bgroup
\def\arraystretch{1.8}
\begin{tabular}
{|l|c|c|c|}
\hline
Notation & Diffeology on & Map $r^*$ & Pushforward of diffeology on \\ \hline
$\mathcal{D}_{f\cup g_1}^{i^*}(Y)$ & $\Omega^m(Y)$ & $i^*$ & $\Omega_{f\cup g_1}^m(X_1)$\\ 
\hline
$\mathcal{D}_{\cup_f,g_1}^{f^*j^*}(Y)$ & $\Omega^m(Y)$ & $f^*j^*$ & $\Omega_{\cup_f,g_1}^m(X_2)$\\ \hline
$\mathcal{D}_{\cup_f,g_2}^{i^*}(Y)$ & $\Omega^m(Y)$ & $i^*$ & $\Omega_{\cup_f,g_2}^m(X_1)$\\ \hline
$\mathcal{D}_{g_2}^{f^*j^*}(Y)$ & $\Omega^m(Y)$ & $f^*j^*$ & $\Omega_{g_2}^m(X_2)$\\ \hline
$\mathcal{D}_{f\cup g_1}^{k_1^*}(Z_1)$ & $\Omega^m(Z_1)$ & $k_1^*$ & $\Omega_{f\cup g_1}^m(X_1)$\\ \hline
$\mathcal{D}^{g_1^*l_1^*}(Z_1)$ & $\Omega^m(Z_1)$ & $g_1^*l_1^*$ & $\Omega^m(X_3)$\\ \hline
$\mathcal{D}_{g_2}^{k_2^*}(Z_2)$ & $\Omega^m(Z_2)$ & $k_2^*$ & $\Omega_{g_2}^m(X_2)$\\ \hline
$\mathcal{D}^{g_2^*l_2^*}(Z_2)$ & $\Omega^m(Z_2)$ & $g_2^*l_2^*$ & $\Omega^m(X_3)$\\ \hline
$\mathcal{D}_{\tilde{g}_1}^{\bar{k}_1^*}(\alpha_1(Z_1))$ & $\Omega^m(\alpha_1(Z_1))$ & $\bar{k}_1^*$ & $\Omega_{\tilde{g}_1}^m(X_1\cup_fX_2)$\\ \hline
$\mathcal{D}^{\tilde{g}_1^*l_1^*}(\alpha_1(Z_1))$ & $\Omega^m(\alpha_1(Z_1))$ & $\tilde{g}_1^*l_1^*$ & $\Omega^m(X_3)$\\ \hline
$\mathcal{D}_{\tilde{g}_2}^{\bar{k}_2^*}(\alpha_2(Z_2))$ & $\Omega^m(\alpha_2(Z_2))$ & $\bar{k}_2^*$ & $\Omega_{\tilde{g}_2}^m(X_1\cup_fX_2)$\\ \hline
$\mathcal{D}^{\tilde{g}_2^*l_2^*}(\alpha_2(Z_2))$ & $\Omega^m(\alpha_2(Z_2))$ & $\tilde{g}_2^*l_2^*$ & $\Omega^m(X_3)$\\ \hline
\end{tabular}
\egroup
\end{center}

For instance, the first line of the table indicates that $\mathcal{D}_{f\cup g_1}^{i^*}(Y)$ is the diffeology on $\Omega^m(Y)$ that is the pushforward of the subset diffeology on $\Omega_{f\cup g_1}^m(X_1)$ by the map $i^*$, and so on. In the above table and unless specified otherwise, the diffeology on $\Omega^m(A)$, where $A$ is a subset of a diffeological space $B$, is the standard functional diffeology on $\Omega^m(A)$ relative to the subset diffeology on $A\subseteq B$; whereas the diffeology on some $\Omega_{inv}^m(A)$ is the subset diffeology relative to the inclusion $\Omega_{inv}^m(A)\leqslant\Omega^m(A)$.

We also consider the maps $\alpha_1|_{Z_1}$ and $\alpha_2|_{Z_2}$ as maps $Z_1\to\alpha_1(Z_1)$ and $Z_2\to \alpha_2(Z_2)$ respectively, for which we have $\bar{k}_1\circ\alpha_1|_{Z_1}=\alpha_1\circ k_1$, $\bar{k}_2\circ \alpha_2|_{Z_2}=\alpha_2\circ k_2$. 

Then, the following two statements are true (recall that by definition $g_1=\tilde{g}_1\circ\alpha_1|_{Z_1}$, $g_2=\tilde{g}_2\circ \alpha_2|_{Z_2}$, and that $\alpha_1\circ k_1=\bar{k}_1\circ\alpha_1|_{Z_1}$, $\bar{k}_2\circ \alpha_2|_{Z_2}=\alpha_2|_{Z_2}\circ k_2$).

\begin{prop}\label{inherit:pshfrd:eq:prop}
We have:
\begin{enumerate}
\item If $f,g_1$ are such that $\mathcal{D}_{f\cup g_1}^{i^*}(Y)=\mathcal{D}_{\cup_f,g_1}^{f^*j^*}(Y)$ and $\mathcal{D}_{f\cup g_1}^{k_1^*}(Z_1)=\mathcal{D}^{g_1^*l_1^*}(Z_1)$, then $\mathcal{D}_{\tilde{g}_1}^{\bar{k}_1^*}(\alpha_1(Z_1))=\mathcal{D}^{\tilde{g}_1^*l_1^*}(\alpha_1(Z_1))$; 
\item If $f,g_2$ are such that $\mathcal{D}_{\cup_f,g_2}^{i^*}(Y)=\mathcal{D}_{g_2}^{f^*j^*}(Y)$ and $\mathcal{D}_{g_2}^{k_2^*}(Z_2)=\mathcal{D}^{g_2^*l_2^*}(Z_2)$, then $\mathcal{D}_{\tilde{g}_2}^{\bar{k}_2^*}(\alpha_2(Z_2))=\mathcal{D}^{\tilde{g}_2^*l_2^*}(\alpha_2(Z_2))$.
\end{enumerate}
\end{prop}

\begin{proof}
Let us prove 1. We first observe that $\mbox{Ker}(\alpha_1|_{Z_1}^*)=\{0\}$. Indeed, let $\omega\in\mbox{Ker}(\alpha_1|_{Z_1}^*)\leqslant\Omega^m(\alpha_1(Z_1))$; let $p$ be a plot of $\alpha_1(Z_1)$. Since $f$ is a subduction, $p$ locally has form $\alpha_1|_{Z_1}\circ p_1$ for some plot $p_1$ of $Z_1$. Therefore $\omega(p)=\alpha_1|_{Z_1}^*\omega(p_1)=0$, which means that $\omega$ is the zero form.

Let now $q_{1,2}:U\to\Omega_{\tilde{g}_1}^m(X_1\cup_fX_2)$ be a plot, and let $(q_1,q_2)=\pi^*q$. Then by Theorem~\ref{f,g-inv:iterat:thm} and up to appropriately shrinking $U$ we can assume that $q_1$ is a plot of $\Omega_{f\cup g_1}^m(X_1)$, $q_2$ is a plot of $\Omega_{\cup_f,g_1}^m(X_2)$, and by the second equality in the assumption there exists a plot $q_3$ of $\Omega^m(X_3)$ such that $k_1^*q_1=g_1^*l_1^*q_3=\alpha_1^*\tilde{g}_1^*l_1^*q_3$. Now, since $q_1=\alpha_1^*q_{1,2}$, by definition of $\bar{k}_1$ we obtain $\alpha_1|_{Z_1}^*\bar{k}_1^*q_{1,2}=\alpha_1|_{Z_1}^*\tilde{g}_1^*l_1^*q_3$, and since $\mbox{Ker}(\alpha_1|_{Z_1}^*)=\{0\}$, we obtain $\bar{k}_1^*q_{1,2}=\tilde{g}_1^*l_1^*q_3$, so that $\mathcal{D}_{\tilde{g}_1}^{\bar{k}_1^*}(\alpha_1(Z_1))\subseteq\mathcal{D}^{\tilde{g}_1^*l_1^*}(\alpha_1(Z_1))$.

\emph{Vice versa}, let $q_3:U\to\Omega^m(X_3)$ be a plot. By the second and the first part of the assumption respectively, and up to appropriately shrinking $U$, there exist a plot $q_1$ of $\Omega_{f\cup g_1}^m(X_1)$ such that $k_1^*q_1=g_1^*l_1^*q_3$ and a plot $q_2$ of $\Omega_{\cup f,g_1}^m(X_2)$ such that $k_1^*q_1=f^*j^*q_2$. It now follows from Theorems~\ref{omega:of:glued:inverse:smooth:thm} and \ref{f,g-inv:iterat:thm} that there exists a plot $q_{1,2}$ of $\Omega_{\tilde{g}_1}^m(X_1\cup_fX_2)$ such that $\pi^*q_{1,2}=(q_1,q_2)$, and we have, as before, $k_1^*q_1=k_1^*\alpha_1^*q_{1,2}=\alpha_1^*\bar{k}_1^*q_{1,2}$, $g_1^*l_1^*q_3=\alpha_1^*\tilde{g}_1^*l_1^*q_3$. Thus, $\alpha_1^*\bar{k}_1^*q_{1,2}=\alpha_1^*\tilde{g}_1^*l_1^*q_3$, and again, since $\mbox{Ker}(\alpha_1|_{Z_1}^*)=\{0\}$, we have $\bar{k}_1^*q_{1,2}=\tilde{g}_1^*l_1^*q_3$, so that $\mathcal{D}_{\tilde{g}_1}^{\bar{k}_1^*}(\alpha_1(Z_1))\supseteq\mathcal{D}^{\tilde{g}_1^*l_1^*}(\alpha_1(Z_1))$.

Let us now prove 2. Observe first of all that, since $f$ is a subduction, $\alpha_2|_{Z_2}$ is a diffeomorphism, so we immediately obtain that $\mbox{Ker}(\alpha_2|_{Z_2}^*)=\{0\}$. Let first $q_3:U\to\Omega^m(X_3)$ be a plot; by the second and first equalities in the assumption, and up to apropriately shrinking $U$, there exist a plot $q_2$ of $\Omega_{g_2}^m(X_2)$ such $g_2^*l_2^*q_3=k_2^*q_2$, and a plot $q_1$ of $\Omega_{\cup_f,g_2}^m(X_1)$ such that $f^*j^*q_2=i^*q_1$. It then follows that $(q_1,q_2)$ is a plot of $\Omega_{\cup_f,g_2}^m(X_1)\oplus_{comp}\Omega_{g_2}^m(X_2)$. Define $q_{1,2}:=\mathcal{L}\circ(q_1,q_2)$, where $\mathcal{L}$ is the diffeomorphism of Theorem~\ref{omega:of:glued:inverse:smooth:thm}. Then by Theorem~\ref{f,g-inv:iterat:thm} $q_{1,2}$ is a plot of $\Omega_{\tilde{g}_2}^m(X_1\cup_fX_2)$, and since $\alpha_2^*q_{1,2}=q_2$ by construction, it satisfies the equality $g_2^*l_2^*q_3=k_2^*\alpha_2^*q_{1,2}$, hence the equality $\alpha_2|_{Z_2}^*\tilde{g}_2^*l_2^*q_3=\alpha_2|_{Z_2}^*\bar{k}_2^*q_{1,2}$. Finally, since $\alpha_2|_{Z_2}^*$ is injective, we obtain $\tilde{g}_2^*l_2^*q_3=\bar{k}_2^*q_{1,2}$, as wanted.

\emph{Vice versa}, let $q_{1,2}:U\to\Omega_{\tilde{g}_2}^m(X_1\cup_fX_2)$ be a plot, and let $(q_1,q_2)=\pi^*q_{1,2}$ (recall that $\pi:X_1\sqcup X_2\to X_1\cup_fX_2$ is the natural projection). By assumption, for U small enough there exists a plot $q_3$ of $\Omega^m(X_3)$ such that $g_2^*l_2^*q_3=k_2^*q_2$. Then, since $q_2=\alpha_2^*q_{1,2}$, we have $\alpha_2|_{Z_2}^*\tilde{g}_2^*l_2^*q_3=\alpha_2|_{Z_2}^*\bar{k}_2^*q_{1,2}$, and again, $\alpha_2|_{Z_2}^*$ being injective, we conclude that $\tilde{g}_2^*l_2^*q_3=\bar{k}_2^*q_{1,2}$.
\end{proof}

\begin{example}\label{phfrd:dfgies:expl}
Let us consider Example 4 and $m=0$ (the case $m=1$ is similar). The meaning of the equality $\mathcal{D}_{f\cup g_1}^{i^*}(Y)=\mathcal{D}_{\cup_f,g_1}^{f^*j^*}(Y)$ is that for any smooth map $P_1:U\times\mathbb{R}^2\to\mathbb{R}$ such that $P_1(u,t,0)=P_1(u,0,t)\,\,\forall u\mbox{ and }\forall t>0$ and $P_1(u,t,0)=P_1(u,\pi-t,0)\,\,\forall u\mbox{ and }\forall t\in(0,\pi)$ there must exist a smooth map $P_2:U\times\mathbb{R}\to\mathbb{R}$ such that $P_2(u,t)=P_2(u,\pi-t)=P_1(u,t,0)\,\,\forall u\mbox{ and }\forall t\in(0,\pi)$, and \emph{vice versa}. This is easily seen to be the case: given $P_1$, the desired $P_2$ can be obtained by setting $P_2(u,t)=P_1(u,t,0)$ everywhere, while given $P_2$, the desired $P_1$ can be defined by $P_1(u,x_0,x_1)=P_2(u,x_0+x_1)$. 

The meaning of the equality $\mathcal{D}_{f\cup g_1}^{k_1^*}(Z_1)=\mathcal{D}^{g_1^*l_1^*}(Z_1)$ is that for any smooth map $P_1:U\times\mathbb{R}^2\to\mathbb{R}$ such that $P_1(u,t,0)=P_1(u,0,t)\,\,\forall u\mbox{ and }\forall t>0$ and $P_1(u,t,0)=P_1(u,\pi-t,0)\,\,\forall u\mbox{ and }\forall t\in(0,\pi)$ there must exist a smooth map $P_3:U\times\mathbb{R}\to\mathbb{R}$ such that $P_1(u,t,0)=P_3(u,\sin t)\,\,\forall u\mbox{ and }\forall t\in(0,\pi)$, and \emph{vice versa}. The inclusion $\mathcal{D}_{f\cup g_1}^{k_1^*}(Z_1)\supseteq\mathcal{D}^{g_1^*l_1^*}(Z_1)$ is trivial: given $P_3$, we obtain $P_1$ by setting $P_1(u,x_0,x_1)=P_3(u,\sin(x_0+x_1))$ for all $u$ and for all $t\in(0,\pi)$. On the other hand, the reverse inclusion is potentially more complicated; it is easy to see that it is equivalent to there being, for every smooth $g:U\times\mathbb{R}\to\mathbb{R}$ such that $g(u,t)=g(u,\pi-t)\,\forall u\,\forall t\in(0,\pi)$, a smooth, on $U\times\mathbb{R}$, $P_3$ such that $g(u,t)=P_3(u,\sin t)$ (for any fixed $u_0$ this indeed appears to follow from considering the Fourier series of the even function $h:[-\frac{\pi}{2},\frac{\pi}{2}]\to\mathbb{R}$ given by $h(t)=g(u_0,t+\frac{\pi}{2})$)\footnote{Since $h$ is smooth and even, we have $h(t)=a_0+\sum_{n=1}^\infty a_n\cos(2nt)$, where $a_n$'s form a rapidly decaying sequence. By direct calculation $g(u_0,t)=a_0+\sum_{n=1}^\infty a_nQ_n(\sin t)$, where $Q_n(s)$ is a polynomial of degree at most $4n$ and such that $|Q_n(s)|\leqslant 10n$ for all $s\in[0,1]$; thus, we also have $|Q_n^{(m)}(s)|\leqslant 4^m\cdot 10\cdot n^{m+1}$. Since the sequence $\{a_n\}$ decays rapidly, by Weierstrass M-test the series $a_0+\sum_{n=1}^\infty a_nQ_n(s)$ converges uniformly on $[0,1]$ to some $h_3$; observe that on $(0,1)$ $h_3$ is differentiable as a uniform limit of differentiable functions, and by construction it is bounded on $[0,1]$. Furthermore, by all the same arguments all series $a_0+\sum_{n=1}^\infty a_nQ_n^{(m)}(s)$ converge uniformly to functions differentiable on $(0,1)$ and bounded on $[0,1]$ (and more precisely, on $(0,1)$ they converge to the derivatives of $h_3$). It then follows that $h_3$ has all the continuous derivatives at $0$ and at $1$ and therefore by Seeley's extension theorem \cite{seeley} it can be extended to a smooth function on $\mathbb{R}$. By varying $u_0$ in $U$, we thus obtain a function $P_3(u,t)$ on $U\times\mathbb{R}$ that satisfies $P_1(u,t,0)=P_3(u,\sin t)$ for all relevant $u,t$ and is smooth in it for every fixed $u$, but to ensure that it is smooth as a whole, we essentially need to ensure that the Seeley's operator is smooth in the functional diffeology}.

The meaning of the equality $\mathcal{D}_{\cup_f,g_2}^{i^*}(Y)=\mathcal{D}_{g_2}^{f^*j^*}(Y)$ is that for any smooth map $P_1:U\times\mathbb{R}^2\to\mathbb{R}$ such that $P_1(u,t,0)=P_1(u,0,t)\,\forall u\,\forall t>0$ and $P_1(u,t,0)=P_1(u,2-t,0)\,\forall u\,\forall t\in(0,2)$ there must exist a smooth $P_2:U\times\mathbb{R}\to\mathbb{R}$ such that $P_2(u,t)=P_2(u,2-t)\,\forall u\,\forall t\in(0,2)$ and $P_1(u,t,0)=P_2(u,t)\,\forall u\,\forall t>0$, and \emph{vice versa}. This is analogous to that of the condition $\mathcal{D}_{f\cup g_1}^{i^*}(Y)=\mathcal{D}_{\cup_f,g_1}^{f^*j^*}(Y)$: given $P_1$, we obtain the desired $P_2$ by setting $P_2(u,t)=P_1(u,t,0)$, while given $P_2$, the desired $P_1$ is defined by $P_1(u,x_0,x_1)=P_2(u,x_0+x_1)$.

The meaning of the equality $\mathcal{D}_{g_2}^{k_2^*}(Z_2)=\mathcal{D}^{g_2^*l_2^*}(Z_2)$ is there being, for any smooth map $P_2:U\times\mathbb{R}\to\mathbb{R}$ such that $P_2(u,t)=P_2(u,2-t)\,\forall u\,\forall t\in(0,2)$, a smooth map $P_3:U\times\mathbb{R}\to\mathbb{R}$ such that $P_2(u,t)=P_3(u,(t-1)^2)\,\forall u\,\forall t\in(0,2)$, and \emph{vice versa}. Also in this case the inclusion $\mathcal{D}_{g_2}^{k_2^*}(Z_2)\supseteq\mathcal{D}^{g_2^*l_2^*}(Z_2)$ is trivial: given $P_3$, we obtain $P_2$ by setting $P_2(u,t)=P_3(u,(t-1)^2)$. On the other hand, the reverse inclusion is less trivial: it amounts to the existence, for every smooth $P_2$ such that $P_2(u,t)=P_2(u,2-t,0)\,\forall u\,\forall t\in(0,2)$, of a smooth $P_3$ such that $P_2(u,t)=P_3(u,(t-1)^2)\,\forall u\,\forall t\in(0,2)$. We observe that for a fixed $u$ at least (probably not only), the existence of such $P_3$ follows from Theorem 1 (see also Remark p.160) of \cite{whit43} on even smooth functions applied to the function $h:(-1,1)\ni t\mapsto P_2(u,t+1)$. 
\end{example}

\begin{example}
In the case of Example~\ref{example5:ex} it is easy to see that the conditions $\mathcal{D}_{f\cup g_1}^{i^*}(Y)=\mathcal{D}_{\cup_f,g_1}^{f^*j^*}(Y)$ and $\mathcal{D}_{f\cup g_1}^{k_1^*}(Z_1)=\mathcal{D}^{g_1^*l_1^*}(Z_1)$ are satisfied for all $m$, so the result of gluing inherits the property $\mathcal{D}_{\tilde{g}_1}^{\bar{k}_1^*}(\alpha_1(Z_1))=\mathcal{D}^{\tilde{g}_1^*l_1^*}(\alpha_1(Z_1))$.
\end{example}

\subsubsection{Gluing along diffeomorphisms}\label{case:diffeo:sect} 

Another simplification of invariance conditions can be obtained in the case when at least one of the maps $f,g_1,g_2$ involved in gluing, is a diffeomorphism.

\paragraph{When only one diffeomorphism is involved: equivalent plots and invariant forms} Let us first provide some extra comments in the case when just one of the maps $f,g_1,g_2$ is a diffeomorphism.

\emph{Suppose that $f$ is a diffeomorphism.} In this case it is easy to see that $f\cup g_1$-equivalent plots are precisely $g_1$-equivalent plots. On the other hand, while the definitions of $(\cup_f,g_1)$-equivalent plots and of $(\cup_f,g_2)$-equivalent plots can be reformulated somewhat more concisely: $p_2(u)\neq p_2'(u)$ $\Rightarrow$ $p_2(u),p_2'(u)\in f(Y\cap Z_2)$ $\&$ $g_1(f^{-1}(p_2(u)))=g_1(f^{-1}(p_2'(u)))$ and $p_1(u)\neq p_1'(u)$ $\Rightarrow$ $p_1(u),p_1'(u)\in f^{-1}(Z_1)$ $\&$ $g_2(f(p_1(u)))=g_2(f(p_1'(u)))$, this reformulation does not lead to any evident conclusion. Likewise, the only substantial simplification that we obtain for invariant forms is the equality $\Omega_{f\cup g_1}^m(X_1)=\Omega_{g_1}^m(X_1)$, and so $\Omega_{\tilde{g}_1}^m(X_1\cup_fX_2)\cong\Omega_{g_1}^m(X_1)\oplus_{comp}\Omega_{\cup_f,g_1}^m(X_2)$ (while the expression for $\Omega_{\tilde{g}_2}^m(X_1\cup_fX_2)$ cannot be simplified).

\emph{Suppose that $g_1$ is a diffeomorphism.} It is easy to see that in this case $f\cup g_1$-equivalent plots are precisely the $f$-equivalent plots, while any plot of $X_2$ is $(\cup_f,g_1)$-equivalent only to itself. Accordingly, we have $\Omega_{f\cup g_1}^m(X_1)=\Omega_f^m(X_1)$, $\Omega_{\cup_f,g_1}^m(X_2)=\Omega^m(X_2)$; in particular, $\Omega_{\tilde{g}_1}^m(X_1\cup_fX_2)\cong\Omega_f^m(X_1)\oplus\Omega^m(X_2)\cong\Omega^m(X_1\cup_fX_2)$.

\emph{Suppose that $g_2$ is a diffeomorphism.} Then again $(\cup_f,g_2)$-equivalent plots of $X_1$ are precisely the $f$-equivalent plots, while obviously any plot of $X_2$ is $g_2$-equivalent only to itself. Accordingly, $\Omega_{\cup_f,g_1}^m(X_1)=\Omega_f^m(X_1)$, $\Omega_{g_2}^m(X_2)=\Omega^m(X_2)$; in particular, $\Omega_{\tilde{g}_2}^m(X_1\cup_fX_2)\cong\Omega_f^m(X_1)\oplus\Omega^m(X_2)\cong\Omega^m(X_1\cup_fX_2)$.

\paragraph{When all gluing maps are diffeomorphisms} In this case all forms in $\Omega^m(X_1)$, $\Omega^m(X_2)$ satisfy every applicable condition of invariance, so we have $\Omega_{\tilde{g}_1}^m(X_1\cup_fX_2)=\Omega_{\tilde{g}_2}^m(X_1\cup_fX_2)=\Omega^m(X_1\cup_fX_2)$. We furthermore observe that there is the following commutativity property.

\begin{lemma}\label{commut:glu:lem}
If $f$ is a diffeomorphism of its domain with its image then there is a diffeomorphism $$X_1\cup_fX_2\cong X_2\cup_{f^{-1}}X_1.$$
\end{lemma}

\begin{proof}
The desired diffeomorphism is (of course) the pushforward of the obvious diffeomorphism $X_1\sqcup X_2\cong X_2\sqcup X_1$ by the defining projections $X_1\sqcup X_2\to X_1\cup_fX_2$ and $X_2\sqcup X_1\to X_2\cup_{f^{-1}}X_1$.
\end{proof}

Observe in particular that thanks to this lemma and Theorem~\ref{gluing:assctv:thm}, in the case of gluings along diffeomorphisms only, we can consider only the space $(X_1\cup_fX_2)\cup_{\tilde{g}}X_3$ as a model of iterated gluing, since in this case Lemma~\ref{commut:glu:lem} and Corollary~\ref{omega:iterat:cor} allow to calculate the spaces of differential forms for any possible configuration of gluings of three diffeological spaces.

\subsubsection{Other configurations of gluing}\label{other:glue:config:sect}

We have so far considered one particular type of gluing of three diffeological spaces, namely one where first two diffeological spaces are glued to each other, and then the result is glued to some third space. As has just been observed at the end of the previous section, this is sufficient in the case when gluing is performed along diffeomorphisms, but, as has been noted in Section~\ref{notation:iter:sect}, in general there are other possible configurations of gluings which \emph{a priori} should be considered separately; namely, in certain cases the two gluings can be performed in a different order, and moreover, with a slightly different setup, after gluing together two spaces, we can glue some third space \emph{to} the result. In this concluding section we consider these possibilities.

\paragraph{Changing the order of gluing} This involves considering the spaces $X_1\cup_{g_1}X_3$ and  $X_2\cup_{g_2}X_3$, and then using the map $f$, respectively, either to glue $X_1\cup_{g_1}X_3$ to $X_2$, or to glue $X_1$ to $X_2\cup_{g_2}X_3$. This requires an additional assumption, namely, we also assume that $g_1,g_2$ are subductions (as is $f$) and $g_1$ is such that $g_1(z_1)=g_1(z_1')\Rightarrow f(z_1)=f(z_1')$ for all $z_1,z_1'\in Y\cap Z_1$ (so that $f(z)=f(z')\Leftrightarrow g_1(z_1)=g_1(z_1')$ for all $z_1,z_1'\in Y\cap Z_1$). To describe the purported gluings, we introduce, for the spaces $X_1\cup_{g_1}X_3$ and  $X_2\cup_{g_2}X_3$, the analogues of maps $\alpha_1$ and $\alpha_2$, which we denote $$\beta_1:X_1\to X_1\cup_{g_1}X_3,\,\,\,\beta_3:X_3\to X_1\cup_{g_1}X_3,\,\,\,\gamma_2:X_2\to X_2\cup_{g_2}X_3,\,\,\,\gamma_3:X_3\to X_2\cup_{g_2}X_3,$$ and we define $\tilde{f}_1:\beta_1(Y)\to X_2$ to be such that $f=\tilde{f}_1\circ\beta_1|_Y$ and $\tilde{f}_2:Y\to X_2\cup_{g_2}X_3$ to be $\tilde{f}_2=\gamma_2\circ f$. We then have the following statement.

\begin{thm}\label{gluing:assctv:thm}
Let $f,g_1,g_2$ be as above. Then there are diffeomorphisms: $$(X_1\cup_fX_2)\cup_{\tilde{g}_1}X_3\cong (X_1\cup_{g_1}X_3)\cup_{\tilde{f}_1}X_2$$ and $$(X_1\cup_fX_2)\cup_{\tilde{g}_2}X_3\cong X_1\cup_{\tilde{f}_2}(X_2\cup_{g_2}X_3).$$
\end{thm}

\begin{proof}
It is obvious that there is a commutativity-associativity diffeomorphism $$(X_1\sqcup X_2)\sqcup X_3\cong (X_1\sqcup X_3)\sqcup X_2$$ and an associativity diffeomorphism $$(X_1\sqcup X_2)\sqcup X_3\cong X_1\sqcup (X_2\sqcup X_3).$$ Denote by 
\[\begin{array}{lcr}
\pi_f:X_1\sqcup X_2\to X_1\cup_fX_2, & \pi_{g_1}:X_1\sqcup X_3\to X_1\cup_{g_1}X_3, & \pi_{g_2}:X_2\sqcup X_3\to X_2\cup_gX_3,\\ 
\end{array}\]
\[\begin{array}{ll}
\pi_{\tilde{g}_1}:(X_1\cup_fX_2)\sqcup X_3\to (X_1\cup_fX_2)\cup_{\tilde{g}_1}X_3,& \pi_{\tilde{g}_2}:(X_1\cup_fX_2)\sqcup X_3\to (X_1\cup_fX_2)\cup_{\tilde{g}_2}X_3, \\
\pi_{\tilde{f}_1}:(X_1\cup_{g_1}X_3)\sqcup X_2\to(X_1\cup_{g_1}X_3)\cup_{\tilde{f}_1}X_2, & \pi_{\tilde{f}_2}:X_1\sqcup(X_2\cup_{g_2}X_3)\to X_1\cup_{\tilde{f}_2}(X_2\cup_{g_2}X_3) 
\end{array}\]
the corresponding natural projections, and by 
\[\begin{array}{lcr}
\mbox{id}_{X_1}:X_1\to X_1, & \mbox{id}_{X_2}:X_2\to X_2, & \mbox{id}_{X_3}:X_3\to X_3
\end{array}\] the identity maps; the maps 
\[\begin{array}{lr}
\pi_f\sqcup\mbox{id}_{X_3}:(X_1\sqcup X_2)\sqcup X_3\to(X_1\cup_fX_2)\sqcup X_3, &
\pi_{g_1}\sqcup\mbox{id}_{X_2}:(X_1\sqcup X_3)\sqcup X_2\to(X_1\cup_{g_1}X_3)\sqcup X_2, 
\end{array}\]
$$\mbox{id}_{X_1}\sqcup\pi_{g_2}:X_1\sqcup(X_2\sqcup X_3)\to X_1\sqcup(X_2\cup_gX_3)$$ 
are then defined in the obvious way.

Let us prove the existence of the first diffeomorphism. Since $f$ and $g_1$ are subductions, it suffices to prove that there is a well-defined pushforward of the above-mentioned commutativity-associativity diffeomorphism by the maps $\pi_{\tilde{g}_1}\circ(\pi_f\sqcup\mbox{id}_{X_3})$ and $\pi_{\tilde{f}_1}\circ(\pi_{g_1}\sqcup\mbox{id}_{X_2})$.

Let $x_1,x_1'\in X_1$, $x_2,x_2'\in X_2$, $x_3,x_3'\in X_3$. To these correspond well-defined elements $[x_i]_1=(\pi_{\tilde{g}_1}\circ(\pi_f\sqcup\mbox{id}_{X_3}))(x_i)$ or $[x_i']_1=(\pi_{\tilde{g}_1}\circ(\pi_f\sqcup\mbox{id}_{X_3}))(x_i')$, $i=1,2,3$, of $(X_1\cup_fX_2)\cup_{\tilde{g}_1}X_3$, as well as well-defined elements $[x_i]_2=(\pi_{\tilde{f}_1}\circ(\pi_{g_1}\sqcup\mbox{id}_{X_2}))(x_i)$ or $[x_i']_2=(\pi_{\tilde{f}_1}\circ(\pi_{g_1}\sqcup\mbox{id}_{X_2}))(x_i')$, $i=1,2,3$, of $(X_1\cup_{g_1}X_3)\cup_{\tilde{f}_1}X_2$.\footnote{In these formulae by $x_i$ or $x_i'$ we actually mean its obvious image in the appropriate disjoint union.} It is sufficent to prove, for all $i$, that $[x_i]_1=[x_i']_1$ $\Leftrightarrow$ $[x_i]_2=[x_i']_2$.

Let $i=1$. Then $[x_i]_1=[x_i']_1$ if and only if $x_1,x_1'\in Y\cup Z_1$ and the following are true:
\begin{enumerate}
\item If $x_1,x_1'\in Y$ then either $f(x_1)=f(x_1')$ or there exist $\bar{x}_1,\bar{x}_1'\in Y\cap Z_1$ such that $f(\bar{x}_1)=f(x_1)$, $f(\bar{x}_1')=f(x_1')$, and $g_1(\bar{x}_1)=g_1(\bar{x}_1')$;
\item If $x_1,x_1'\in Z_1$ then either $g_1(x_1)=g_1(x_1')$ or there exist $\bar{x}_1,\bar{x}_1'\in Y\cap Z_1$ such that $g_1(\bar{x}_1)=g_1(x_1)$, $g_1(\bar{x}_1')=g_1(x_1')$, and $f(\bar{x}_1)=f(\bar{x}_1')$;
\item If $x_1\in Y$ and $x_1'\in Z_1$ (or $x_1\in Z_1$ and $x_1'\in Y$) then there exists $\bar{x}_1\in Y\cap Z_1$ (or resp. $\bar{x}_1'\in Y\cap Z_1$) such that $f(\bar{x}_1)=f(x_1)$ and $g_1(\bar{x}_1)=g_1(x_1')$ (resp. $f(\bar{x}_1')=f(x_1')$ and $g_1(\bar{x}_1')=g_1(x_1)$).
\end{enumerate}
Whereas, $[x_i]_2=[x_i']_2$ if and only if $x_1,x_1'\in Y\cup Z_1$ and the following are true:
\begin{enumerate}
\item[1a] If $x_1,x_1'\in Y$ then either $f(x_1)=f(x_1')$ or there exist $\bar{x}_1,\bar{x}_1'\in Y\cap Z_1$ such that $g_1(\bar{x}_1)=g_1(x_1)$, $g_1(\bar{x}_1')=g_1(x_1')$, and $f(\bar{x}_1)=f(\bar{x}_1')$;
\item[2a] If $x_1,x_1'\in Z_1$ then either $g_1(x_1)=g_1(x_1')$ or there exist $\bar{x}_1,\bar{x}_1'\in Y\cap Z_1$ such that $g_1(\bar{x}_1)=g_1(x_1)$, $g_1(\bar{x}_1')=g_1(x_1')$, and $f(\bar{x}_1)=f(\bar{x}_1')$;
\item[3a] If $x_1\in Y$ and $x_1'\in Z_1$ (or $x_1\in Z_1$ and $x_1'\in Y$) then there exists $\bar{x}_1\in Y\cap Z_1$ (or resp. $\bar{x}_1'\in Y\cap Z_1$) such that $f(\bar{x}_1)=f(x_1)$ and $g_1(\bar{x}_1)=g_1(x_1')$ (resp. $f(\bar{x}_1')=f(x_1')$ and $g_1(\bar{x}_1')=g_1(x_1)$).
\end{enumerate}
We can now observe that 2 and 2a, as well as 3 and 3a, coincide word for word, while 1 and 1a are equivalent by the assumption on $g_1$.

Let now $i=2$. Then $[x_2]_1=[x_2']_1$ if and only if $x_2,x_2'\in f(Y)$, and there exist $x_1\in f^{-1}(x_2)\cap Z_1$, $x_1'\in f^{-1}(x_2')\cap Z_1$ such that $g_1(x_1)=g_1(x_1')$, while $[x_i]_2=[x_i']_2$ if and only if $x_2=x_2'$. Now, $g_1(x_1)=g_1(x_1')$ implies $f(x_1)=f(x_1')$ by assumption on $g_1$, and this implies $x_2=x_2'$ by choice of $x_1,x_1'$. Therefore  $[x_2]_1=[x_2']_1$ $\Leftrightarrow$ $[x_2]_2=[x_1']_2$.

Finally, let $i=3$. Then $[x_3]_1=[x_3']_1$ if and only if $x_3=x_3'$, while $[x_3]_2=[x_3']_2$ if and only if $x_3,x_3'\in g_1(Z_1)$ and there exist $x_1\in g_1^{-1}(x_3)\cap Y$, $x_1'\in g_1^{-1}(x_3')\cap Y$, and the two conditions are equivalent by the same reasoning as in the case $i=2$. Thus, the existence of the first diffeomorphism has been proven.

Let us now prove the existence of the second diffeomorphism. Again, $f$ and $g_2$ being subductions, it suffices to prove that there is a well-defined pushforward of the above associativity diffeomorphism by the maps $\pi_{\tilde{g}_2}\circ(\pi_f\sqcup\mbox{id}_{X_3})$ and $\pi_{\tilde{f}_2}\circ(\mbox{id}_{X_1}\sqcup\pi_{g_2})$. Let again $x_i,x_i'\in X_i$, $i=1,2,3$, and let $[x_i]_1=(\pi_{\tilde{g}_2}\circ(\pi_f\sqcup\mbox{id}_{X_3}))(x_i)$, $[x_i']_1=(\pi_{\tilde{g}_2}\circ(\pi_f\sqcup\mbox{id}_{X_3}))(x_i')$ for $i=1,2,3$, and let $[x_i]_2=(\pi_{\tilde{f}_2}\circ(\mbox{id}_{X_1}\sqcup\pi_{g_2}))(x_i)$, $[x_i']_2=(\pi_{\tilde{f}_2}\circ(\mbox{id}_{X_1}\sqcup\pi_{g_2}))(x_i')$, again for $i=1,2,3$. Let us prove that $[x_i]_1=[x_i']_1$ $\Leftrightarrow$ $[x_i]_2=[x_i']_2$. 

Let first $i=1$. Then $[x_1]_1=[x_1']_1$ if and only if $x_1,x_1'\in Y$ and either $f(x_1)=f(x_1')$ and $f(x_1),f(x_1')\in Z_2$ and $g_2(f(x_1))=g_2(f(x_1'))$, and the equality $[x_1]_2=[x_1']_2$ corresponds to exactly the same condition. Likewise, in the case of $i=2$ both $[x_2]_1=[x_2']_1$ and $[x_2]_2=[x_2']_2$ are equivalent to the condition $x_2,x_2'\in Z_2$ and $g_2(x_2)=g_2(x_2')$. Finally, in case $i=3$ both $[x_3]_1=[x_3']_1$ and $[x_3]_2=[x_3']_2$ are equivalent to $x_3=x_3'$, hence they themselves are equivalent. All cases having been exhausted, the theorem is proven.
\end{proof}

We can also observe the following.

\begin{cor}\label{case:assoc:omega:cor}
The corresponding diffeomorphisms $$\Omega^m((X_1\cup_fX_2)\cup_{\tilde{g}_1}X_3)\cong\Omega^m((X_1\cup_{g_1}X_3)\cup_{\tilde{f}_1}X_2),$$ $$\Omega^m((X_1\cup_fX_2)\cup_{\tilde{g}_2}X_3)\cong\Omega^m(X_1\cup_{\tilde{f}_2}(X_2\cup_{g_2}X_3))$$ are given, respectively, by $$(\omega_1\cup_f\omega_2)\cup_{\tilde{g}_1}\omega_3\mapsto(\omega_1\cup_{g_1}\omega_3)\cup_{\tilde{f}_1}\omega_2,\,\,\,(\omega_1\cup_f\omega_2)\cup_{\tilde{g}_2}\omega_3\mapsto\omega_1\cup_{\tilde{f}_2}(\omega_2\cup_{g_2}\omega_3).$$
\end{cor}

\paragraph{The remaining case} The final case to consider is the one of the second gluing map being defined on $X_3$ with values in $X_i$, which is then glued to $X_1\cup_fX_2$. The notation for this case is as follows. Let $X_1,X_2,X_3$ be diffeological spaces, let $Y\subseteq X_1$, $W_1,W_2\subseteq X_3$, let $f:Y\to X_2$ be a subduction, and let $h_i:W_i\to X_i$, $i=1,2$, be smooth. Then we can form the spaces $X_3\cup_{\bar{h}_i}(X_1\cup_fX_2))$, where $\bar{h}_i=\alpha_i\circ h_i$ (with $\alpha_i:X_i\to X_1\cup_fX_2$ as before). Then by Corollary~\ref{omega:of:glued:cor} $\Omega^m(X_3\cup_{\bar{h}_i}(X_1\cup_fX_2))\cong\Omega_{\bar{h}_i}^m(X_3)\oplus_{comp}\Omega^m(X_1\cup_fX_2)$ with the compatibility being with respect to the map $\bar{h}_i$. 

Observe that, since $\alpha_2$ is injective, we have $\Omega_{\bar{h}_2}^m(X_3)=\Omega_{h_2}^m(X_3)$, whereas we need the following description of $\Omega_{\bar{h}_1}^m(X_3)$.

\begin{defn}
Two plots $p_3,p_3':U\to X_3$ are said to be \textbf{$(h_1,f)$-equivalent} if for all $u$ such that $p_3(u)\neq p_3'(u)$ we have $p_3(u),p_3'(u)\in W_1$ and either $h_1(p_3(u))=h_1(p_3'(u))$ or $h_1(p_3(u)),h_1(p_3'(u))\in Y$ and $f(h_1(p_3(u)))=f(h_1(p_3'(u)))$. A form $\omega_3\in\Omega^m(X_3)$ is said to be \textbf{$(h_1,f)$-invariant} if for every two $(h_1,f)$-equivalent plots $p_3,p_3'$ we have $\omega_3(p_3)=\omega_3(p_3')$.
\end{defn}

The space of all $(h_1,f)$-invariant $m$-forms on $X_3$ will be denoted by $\Omega_{h_1,f}^m(X_3)$.

\begin{example}
Let $X_1,X_2,X_3$, and $f$ be as in Example 4, and let $h_1$, defined on $(0,\pi)\subset X_3$ act by $h_1(t)=(\sin t,0)$. Since $f$ is injective on the image of $h_1$, we immediately observe that the $(h_1,f)$-equivalent plots are precisely $h_1$-equivalent plots, and we have $\Omega_{h_1,f}^0(X_3)=\{r_3|r_3(t)=r_3(\pi-t)\,\forall t\in(0,\pi)\}$, $\Omega_{h_1,f}^1(X_3)=\{r_3dt|r_3(t)=-r_3(\pi-t)\,\forall t\in(0,\pi)\}$, where in both cases $r_3$ is a smooth function $(0,+\infty)\to\mathbb{R}$ that admits a smooth extension to the entire $\mathbb{R}$.
\end{example}

We now obtain the following statement.

\begin{prop}
We have $\Omega_{\bar{h}_1}^m(X_3)=\Omega_{h_1,f}^m(X_3)$.
\end{prop}

\begin{proof}
This is a direct consequence of the definition of $(h_1,f)$-invariant forms.
\end{proof}

We also need the following analogue of Proposition~\ref{comp:iter:prop}.

\begin{prop}
Let $\omega_i\in\Omega^m(X_i)$, $i=1,2,3$, be such that $\omega_1$ and $\omega_2$ are $f$-compatible. Then the forms $\omega_3$ and $\omega_1\cup_f\omega_2$ are $\bar{h}_i$-compatible if and only if $\omega_3$ and $\omega_i$ are $h_i$-compatible.
\end{prop}

\begin{proof}
Denote by $k_i:W_i\hookrightarrow X_3$, $l_i:h_i(W_i)\hookrightarrow X_i$, $\bar{l}_i:\bar{h}_i(W_i)\hookrightarrow X_1\cup_fX_2$ the corresponding natural inclusions. The proof follows from the following commutative diagram
$$
\begin{CD}
U @>h_ip_i>> h_i(W_i) @>l_i>> X_i\\
@\vert @. @V\alpha_iVV \\
U @>\bar{h}_ip_i>> \bar{h}_i(W_i) @>\bar{l}_i>> X_1\cup_fX_2
\end{CD}
$$
where $i=1,2$ and $p_i:U\to W_i\subseteq X_3$ is any plot of the subset diffeology on $W_i$, and from the fact that $\alpha_i^*(\omega_1\cup_f\omega_2)=\omega_i$.
\end{proof}

We thus obtain the following conclusion (recall that $i:Y\hookrightarrow X_1$, $j:f(Y)\hookrightarrow X_2$ are the natural inclusions).

\begin{cor}
The space $\Omega^m(X_3\cup_{\bar{h}_1}(X_1\cup_fX_2))$ is diffeomorphic to $$\{(\omega_3,\omega_1,\omega_2)\,|\,\omega_3\in\Omega_{h_1,f}^m(X_3),\omega_1\in\Omega_f^m(X_1),\omega_2\in\Omega^m(X_2),k_1^*\omega_3=h_1^*l_1^*\omega_1,\,i^*\omega_1=f^*j^*\omega_2\},$$ and the space $\Omega^m(X_3\cup_{\bar{h}_i}(X_1\cup_fX_2))$ is diffeomorphic to $$\{(\omega_3,\omega_1,\omega_2)\,|\,\omega_3\in\Omega_{h_2}^m(X_3),\omega_1\in\Omega_f^m(X_1),\omega_2\in\Omega^m(X_2),k_2^*\omega_3=h_2^*l_2^*\omega_2,\,i^*\omega_1=f^*j^*\omega_2\}.$$
\end{cor}

\begin{example}
Observe that the $0$-forms $r_3$ and $r_1$ on $X_3$ and $X_1$ respectively are $h_1$-compatible iff $r_3(t)=r_1(\sin t,0)$ for all $t\in(0,\pi)$, and the $1$-forms $r_3dt$ and $r_{10}dx_0+r_{11}dx_1$ are $h_1$-compatible iff $r_3(t)=\cos t\cdot r_{10}(\sin t,0)$ for all $t\in(0,\pi)$. We thus obtain that $\Omega^0(X_3\cup_{\bar{h}_1}(X_1\cup_fX_2))$ is diffeomorhic to the subset of $\Omega^0(X_3)\oplus\Omega^0(X_1)\oplus\Omega^0(X_2)$ consisting of triples $(r_3,r_1,r_2))$ such that $r_1(t,0)=r_1(0,t)=r_2(t)$ for all $t>0$, $r_3(t)=r_3(\pi-t)=r_1(\sin  t,0)$ for all $t\in(0,\pi)$, while  $\Omega^1(X_3\cup_{\bar{h}_1}(X_1\cup_fX_2))$ is diffeomorhic to the subset of $\Omega^1(X_3)\oplus\Omega^1(X_1)\oplus\Omega^1(X_2)$ consisting of triples $(r_3dt,r_{10}dx_0+r_{11}dx_1,r_2ds))$ such that $r_{10}(t,0)=r_{11}(0,t)=r_2(t)$ for all $t>0$, $r_3(t)=r_3(\pi-t)=r_{10}(\sin t,0)$ for all $t\in(0,\pi)$.

Let now $W_2=(0,2)$ and $h_2(t)=(t-1)^2$. Then $\Omega^0(X_3\cup_{\bar{h}_2}(X_1\cup_fX_2))$ is diffeomorhic to the subset of $\Omega^0(X_3)\oplus\Omega^0(X_1)\oplus\Omega^0(X_2)$ consisting of triples $(r_3,r_1,r_2))$ such that  $r_1(t,0)=r_1(0,t)=r_2(t)$ for all $t>0$ and $r_3(t)=r_2((t-1)^2)$ for all $t\in(0,2)$, while $\Omega^1(X_3\cup_{\bar{h}_1}(X_1\cup_fX_2))$ is diffeomorhic to the subset of $\Omega^1(X_3)\oplus\Omega^1(X_1)\oplus\Omega^1(X_2)$ consisting of triples $(r_3dt,r_{10}dx_0+r_{11}dx_1,r_2ds))$ such that $r_{10}(t,0)=r_{11}(0,t)=r_2(t)$ for all $t>0$ and $r_3(t)=2(t-1)r_2((t-1)^2)$ for all $t\in(0,2)$.
\end{example}

We can also establish an analogue of the inheritance of the $\mathcal{D}_f^{i^*}(Y)=\mathcal{D}^{f^*j^*}(Y)$ condition (an analogue of Proposition~\ref{inherit:pshfrd:eq:prop}), which is as follows.

\begin{prop}
Let $f,h_1$ be such that $\mathcal{D}_f^{i^*}(Y)=\mathcal{D}^{f^*j^*}(Y)$ and $\mathcal{D}_{h_1,f}^{k_1^*}(W_1)=\mathcal{D}_f^{h_1^*l_1^*}(W_1)$, where $\mathcal{D}_{h_1,f}^{k_1^*}(W_1)$ is the pushforward of the diffeology on $\Omega_{h_1,f}^m(X_3)$ by the map $k_1^*$, and $\mathcal{D}_f^{h_1^*l_1^*}(W_1)$ is the pushforward of the diffeology on $\Omega_f^m(X_1)$ by the map $h_1^*l_1^*$. Then $\mathcal{D}_{\bar{h}_1}^{k_1^*}(W_1)=\mathcal{D}^{\bar{h}_1^*\bar{l}_1^*}(W_1)$, where $\mathcal{D}_{\bar{h}_1}^{k_1^*}(W_1)$ is the pushforward of the diffeology on $\Omega_{\bar{h}_1}^m(X_3)$ by $k_1^*$ and $\mathcal{D}^{\bar{h}_1^*\bar{l}_1^*}(W_1)$ is the pushforward of the diffeology on $\Omega^m(X_1\cup_fX_2)$ by the map $\bar{h}_1^*\bar{l}_1^*$. 

Likewise, let $f,h_2$ be such that $\mathcal{D}_f^{i^*}(Y)=\mathcal{D}^{f^*j^*}(Y)$ and $\mathcal{D}_{h_2}^{k_2^*}(W_1)=\mathcal{D}^{h_2^*l_2^*}(W_2)$. Then $\mathcal{D}_{\bar{h}_2}^{k_2^*}(W_2)=\mathcal{D}^{\bar{h}_2^*\bar{l}_2^*}(W_2)$ with notation similarly defined.
\end{prop}

\begin{proof}
Let $q_3$ be a plot of $\Omega_{h_1,f}^m(X_3)=\Omega_{\bar{h}_1}^m(X_3)$. Then locally there exist $q_1$ a plot of $\Omega_f^m(X_1)$ such that $k_1^*q_3=h_1^*l_1^*q_1$ (locally) and $q_2$ a plot of $\Omega^m(X_2)$ such that $i^*q_1=f^*j^*q_2$. Thus, up to appropriately shrinking the domain of definition of $q_3$, we have $k_1^*q_3=\bar{h}_1^*\bar{l}_1^*(q_1\cup_fq_2)$, where $q_1\cup_fq_2$ is a plot of $\Omega^m(X_1\cup_fX_2)$ defined pointwise by $(q_1\cup_fq_2)(u)=q_1(u)\cup_fq_2(u)$.

\emph{Vice versa}, let $q_{1,2}$ be a plot of $\Omega^m(X_1\cup_fX_2)$. Then locally $q=q_1\cup_fq_2$, where $q_1$ is a plot of $\Omega_f^m(X_1)$. Therefore the locally exists a plot $q_3$ of $\Omega_{h_1,f}^m(X_3)$ such that $k_1^*q_3=h_1^*l_1^*q_1=\bar{h}_1^*\bar{l}_1^*q_{1,3}$.

The case of $h_2$ is treated analogously.
\end{proof}

\begin{example}
Let us consider the case of $h_1$ and $m=0$ (the other cases are treated analogously). The condition $\mathcal{D}_f^{i^*}(Y)=\mathcal{D}^{f^*j^*}(Y)$ then corresponds to there being, for every smooth map $P_1:U\times\mathbb{R}^2\to\mathbb{R}$ such that $P_1(u,t,0)=P_1(u,0,t)\,\,\forall u\,\forall t>0$, a smooth map $P_2:U\times\mathbb{R}\to\mathbb{R}$ such that $P_1(u,0,t)=P_2(u,t)\,\,\forall t>0$, and \emph{vice versa} (this is trivially satisfied; given $P_1$, we set $P_2(u,t)=P_1(u,0,t)$ for all $u,t$, and given $P_2$ we set $P_1(u,x_0,x_1)=P_2(u,x_0+x_1)$ for all $u,x_0,x_1$). The condition $\mathcal{D}_{h_1,f}^{k_1^*}(W_1)=\mathcal{D}_f^{h_1^*l_1^*}(W_1)$ corresponds to there being, for every smooth map $P_1:U\times\mathbb{R}^2\to\mathbb{R}$ as above, a smooth map $P_3:U\times\mathbb{R}\to\mathbb{R}$ such that $P_3(u,t)=P_3(u,\pi-t)=P_1(u,\sin t,0)\,\,\forall u\,\forall t\in(0,\pi)$ (given $P_1$, $P_3$ is defined in the obvious way, while, at least for constant plots, the reverse appears to follow from the argument outlined in the footnote to Example~\ref{phfrd:dfgies:expl}).
\end{example}

\vspace{1cm}

\noindent University of Pisa \\
Department of Mathematics \\
Via F. Buonarroti 1C\\
56127 Pisa, Italy\\
\ \\
ekaterina.pervova@unipi.it\\

\end{document}